\def\C{\mathbb{C}}
\def\RR{\boldsymbol{R}}
\def\BB {\boldsymbol{B}}
\def\FF{\boldsymbol{F}}
\def\SSS{\boldsymbol{S}}
\def\sss{\boldsymbol{s}}
\def\BB {\boldsymbol{B}}
\def\KK {\boldsymbol{K}}
\def\AAA {\boldsymbol{A}}
\def\xx{\boldsymbol{x}}
\def\zz{\boldsymbol{z}}
\def\dd{\boldsymbol{d}}
\def\ee{\boldsymbol{e}}
\def\ff{\boldsymbol{f}}
\def\yy{\boldsymbol{y}}
\def\ee{\boldsymbol{e}}
\def\uu{\boldsymbol{u}}
\def\ww{\boldsymbol{w}}
\def\UU{\boldsymbol{U}}
\def\EE{\boldsymbol{E}}
\def\GG{\boldsymbol{G}}
\def\HH{\boldsymbol{H}}
\def\WW{\boldsymbol{W}}
\def\PP{\boldsymbol{P}}
\def\QQ{\boldsymbol{Q}}
\def\RR{\boldsymbol{R}}
\def\TT{\boldsymbol{T}}
\def\YY{\boldsymbol{Y}}
\def\II{\boldsymbol{I}}
\def\00{\boldsymbol{0}}
\def\11{\mathds{1}}
\def\ggamma{\mbox{\boldmath{$\gamma$}}}
\def\DDelta{\mbox{\boldmath{$\Delta$}}}
\def\eepsilon{\mbox{\boldmath{$\epsilon$}}}
\def\xxi{\mbox{\boldmath{$\xi$}}}
\def\mmu{\mbox{\boldmath{$\mu$}}}
\def\zzeta{\mbox{\boldmath{$\zeta$}}}
\newcommand{\off}[1]{}
\newtheorem{theorem}{Theorem}
\newtheorem{lemma}[theorem]{Lemma}
\newenvironment{proof}{\noindent{\bf{Proof.\/}}}{\hfill$\blacksquare$\vskip0.1in}
\newcommand{\beq}{\begin{equation}}
\newcommand{\eeq}{\end{equation}}
\newcommand{\ben}{\begin{enumerate}}
\newcommand{\een}{\end{enumerate}}
\begin{document}
\title
{\bf  A Convergence Study for  Reduced Rank  Extrapolation  on Nonlinear Systems}
\author
{Avram Sidi\\
Computer Science Department\\
Technion - Israel Institute of Technology\\ Haifa 32000, Israel\\
e-mail:{~~~}\url{asidi@cs.technion.ac.il}\\
URL:{~~~}\url{http://www.cs.technion.ac.il/~asidi}}
\date{Appeared in: \ {\em Numerical Algorithms}, 84:957--982, 2020}
\maketitle
\vspace{-1cm}

\thispagestyle{empty}
\newpage

\begin{abstract}\sloppypar
Reduced Rank Extrapolation  (RRE) is a polynomial type  method used to accelerate the  convergence  of  sequences of vectors $\{\xx_m\}$. It is applied successfully in
 different disciplines  of science and engineering in
 the solution of large and sparse systems of linear and  nonlinear equations of very large dimension.
 If $\sss$ is the solution to the system of equations $\xx=\ff(\xx)$, first, a vector sequence $\{\xx_m\}$ is generated   via the fixed-point iterative scheme
 $\xx_{m+1}=\ff(\xx_m)$, $m=0,1,\ldots,$ and next,
 RRE is applied to this sequence to accelerate its convergence. RRE produces
 approximations $\sss_{n,k}$ to $\sss$ that are of the form $\sss_{n,k}=\sum^k_{i=0}\gamma_i\xx_{n+i}$  for some scalars  $\gamma_i$
 depending (nonlinearly) on $\xx_n, \xx_{n+1},\ldots,\xx_{n+k+1}$ and satisfying  $\sum^k_{i=0}\gamma_i=1$. The convergence properties of RRE when applied in conjunction with linear $\ff(\xx)$ have been analyzed in different publications.
 In this work, we discuss  the  convergence of the $\sss_{n,k}$ obtained from RRE with nonlinear $\ff(\xx)$  (i)\,when $n\to\infty$  with fixed  $k$, and (ii)\,in two  so-called {\em cycling} modes.

\end{abstract}
\vspace{1cm} \noindent {\bf Mathematics Subject Classification
2000:}  65B05 (primary), 65H10 (primary), 65F10 (secondary).

\vspace{1cm} \noindent {\bf Keywords and expressions:} Vector extrapolation methods,
minimal polynomial extrapolation (MPE), reduced rank extrapolation (RRE), Krylov subspace methods, nonlinear equations, cycling mode.

\thispagestyle{empty}
\newpage
\pagenumbering{arabic}

\section{Introduction} \label{se1}
\setcounter{equation}{0}
\setcounter{theorem}{0}
Consider a system of  nonlinear algebraic equations of dimension $N$, which we choose to write as
\beq\label{eq1} \xx=\ff(\xx), \quad \ff:\C^N\to\C^N;\quad \text{$\sss$\ solution,}\eeq
where
\beq \xx=[x^{(1)},\ldots,x^{(N)}]^{T},\quad \sss=[s^{(1)},\ldots,s^{(N)}]^{T};
\quad \label{eq1q}x^{(i)},\, s^{(i)}\quad\text{scalars,}\eeq and
\beq \label{eq1w}\ff(\xx)=\big[f_1(\xx),\ldots,f_N(\xx)\big]^{T};
\quad f_i(\xx)=f_i\big(x^{(1)},\ldots,x^{(N)}\big)\quad\text{scalar
functions.}\eeq
One immediate way of solving this system is via the fixed-point iterative scheme
\beq\label{eq2} \xx_{m+1}=\ff(\xx_{m}),\quad m=0,1,\ldots;\quad \text{for some $\xx_0$,}\eeq
provided the sequence $\{\xx_m\}$ converges.
Let $\ff(\xx)$ be twice continuously differentiable in a neighborhood of $\sss$, and let
 $\FF(\xx)$ be the Jacobian matrix of $\ff$ evaluated at $\xx$, that is,
\beq\label{eq:jacobian}\FF(\xx)=\begin{bmatrix}
f_{1,1}(\xx)&f_{1,2}(\xx)&\cdots&f_{1,N}(\xx)\\
f_{2,1}(\xx)&f_{2,2}(\xx)&\cdots&f_{2,N}(\xx)\\
\vdots& \vdots& & \vdots \\
f_{N,1}(\xx)&f_{N,2}(\xx)&\cdots&f_{N,N}(\xx)
\end{bmatrix};\quad f_{i,j}(\xx)=\frac{\partial f_i}{\partial x^{(j)}}(\xx) .\eeq
It is known that (see Ortega and Rheinboldt \cite{Ortega:1970:ISN}, for example) if $\rho(\FF(\xx))$, the spectral radius of $\FF(\xx)$, is such that $\rho(\FF(\sss))<1$ and if $\xx_0$ is sufficiently close to $\sss$, then the sequence $\{\xx_m\}$ converges to $\sss$. The closer $\rho(\FF(\sss))$ is to one, the slower is the convergence of $\{\xx_m\}$ to $\sss$; this is the case in most practical engineering applications.

 The convergence of $\{\xx_m\}$ to $\sss$ can be accelerated substantially by applying to it a  vector  extrapolation method. When applied to $\{\xx_m\}$, an extrapolation method  produces
 approximations $\sss_{n,k}$ to $\sss$ that are, either directly or indirectly, of the form
 \beq \label{eq3}\sss_{n,k}=\sum^k_{i=0}\gamma_i\,\xx_{n+i};\quad \text{$\gamma_i$ \ some scalars},\quad
 \sum^k_{i=0}\gamma_i=1,\eeq
 the $\gamma_i$ depending nonlinearly on the $\xx_m$ used in constructing $\sss_{n,k}$.
 Let $M$ be  the number of the $\xx_m$  needed to construct $\sss_{n,k}$. (Of course, $M$ is not necessarily the same for all vector extrapolation methods.)\footnote{It is clear that the integers $n$ and $k$ are chosen by the user and that $M$ is determined by $n$, $k$, and the extrapolation method being used.}

For the sake of completeness,  here we mention briefly those vector extrapolation methods that have been shown to be useful in applications.

\begin{enumerate}
\item  {\bf Polynomial type methods:} These are {\em minimal polynomial extrapolation  (MPE)}, {\em reduced rank extrapolation (RRE)}, {\em modified  minimal polynomial extrapolation  (MMPE)}, and the most recent {\em singular value decomposition-based minimal polynomial extrapolation (SVD-MPE)}. MPE was introduced by Cabay and Jackson \cite{Cabay:1976:PEM}, RRE was introduced independently by
Kaniel and Stein \cite{Kaniel:1974:LSA}, Eddy \cite{Eddy:1979:ELV}, and
Me\u{s}ina \cite{Mesina:1977:CAI}.\footnote{The approaches of \cite{Kaniel:1974:LSA} and \cite{Mesina:1977:CAI} to RRE are almost identical, in the sense that $\sss_{n,k}=\sum^k_{i=0}\gamma_i\,\xx_{n+i}$ in \cite{Mesina:1977:CAI}, while $\sss_{n,k}=\sum^k_{i=0}\gamma_i\,\xx_{n+i+1}$
in \cite{Kaniel:1974:LSA}, the $\gamma_i$ being the same for both.
The approaches of \cite{Eddy:1979:ELV} and \cite{Mesina:1977:CAI} are completely different, however; their equivalence was proved in the review paper of Smith, Ford, and Sidi \cite{Smith:1987:EMV}.}
  MMPE was introduced independently by
Brezinski \cite{Brezinski:1975:GTS}, Pugachev \cite{Pugachev:1978:ACI}, and
Sidi, Ford, and Smith \cite{Sidi:1986:ACV}.   SVD-MPE is a new method by Sidi
 \cite{Sidi:2016:SVD-MPE}.

\item {\bf Epsilon algorithms:} These are the {\em scalar epsilon algorithm (SEA)}, the {\em vector epsilon algorithm (VEA)}, and the
{\em topological epsilon algorithm (TEA)}. SEA is a method that is based entirely on the famous {\em epsilon algorithm} of Wynn \cite{Wynn:1956:DCT} that implements the transformation of  Shanks \cite{Shanks:1955:NTD} for scalar  sequences. VEA was introduced by  Wynn \cite{Wynn:1962:ATI}. TEA was introduced by Brezinski \cite{Brezinski:1975:GTS}.
\end{enumerate}

 For an earlier account of the epsilon algorithms,  see the book by Brezinski \cite{Brezinski:1977:ACA}. For a comprehensive survey covering the developments that took place until the 1980s, see the survey paper by Smith, Ford, and Sidi \cite{Smith:1987:EMV}
 and the book
  by Brezinski and Redivo Zaglia \cite{Brezinski:1991:EMT}.
   For a geometric approach to the treatment of vector extrapolation methods as these are being applied to linear systems, see Jbilou and Sadok \cite{Jbilou:1995:ASV}. For a more recent review of MPE and RRE, see Sidi \cite{Sidi:2012:RTV}. For a detailed and up-to-date treatment, including development, analysis,  numerical implementation, and various applications,  of all these methods, see the recent book of Sidi \cite{Sidi:2017:VEM}.

Numerically stable and efficient algorithms for  implementing polynomial methods have been proposed by Sidi \cite{Sidi:1991:EIM}, \cite{Sidi:2016:SVD-MPE} for  MPE, RRE, and SVD-MPE and by Jbilou and Sadok \cite{Jbilou:1999:LUI} for MMPE. The epsilon algorithms are normally implemented via their definitions, which involve recursion relations. When applied to sequences $\{\xx_m\}$ generated via  fixed-point iterative schemes from systems of  linear equations, MPE, RRE, and TEA turn out to be equivalent to known Krylov subspace methods for linear systems. This is explored in Sidi \cite{Sidi:1988:EVP}. Yet another recent  paper by Sidi \cite{Sidi:2017:MPERRE} shows that MPE and RRE are very closely related in more than one  way.

Now, all the methods mentioned above have interesting    convergence and convergence acceleration properties that concern the precise asymptotic behavior of the sequences $\{\sss_{n,k}\}^\infty_{n=0}$, with fixed $k$, when the sequences  $\{\xx_m\}$ are generated via  fixed-point iterative schemes from systems of  linear equations; see   Sidi \cite{Sidi:1986:CSP}, \cite{Sidi:1994:CIR}, Sidi, Ford, and Smith \cite{Sidi:1986:ACV}, and Sidi and Bridger \cite{Sidi:1988:CSA}, and also Sidi \cite[Chapter 6]{Sidi:2017:VEM} for the methods MPE, RRE, MMPE, and TEA, Wynn \cite{Wynn:1966:CSE} and
 Sidi  \cite{Sidi:1996:ECW} for SEA, and  Graves-Morris and Saff \cite{Graves:1988:RCT} for  VEA. We shall call this mode of usage of vector extrapolation methods the $n$-{\em Mode}.

 Unfortunately, the $n$-Mode convergence theories  that apply to the case in which $\ff(\xx)$ is linear do not apply to the case in which $\ff(\xx)$ is nonlinear. This is one of the topics we would like to study here, RRE being the extrapolation method used.
 That is,  we would like to investigate the convergence properties of the sequences $\{\sss_{n,k}\}^\infty_{n=0}$, with fixed $k$, obtained by applying RRE  to $\{\xx_m\}$ generated as in \eqref{eq2}, where $\ff(\xx)$ is nonlinear.

The  numerical implementations of  polynomial extrapolation methods  and of epsilon algorithms, when  generating the vectors $\sss_{n,k}$,   necessitate the keeping of resp. $k+2$ and $2k+1$ vectors in core memory simultaneously.  In case we would like to increase $k$ to improve the quality of the $\sss_{n,k}$, this may pose  a
serious  problem when  we are dealing with very high dimensional vectors, which is the case in most large scale applications.
Within the context described via \eqref{eq1}--\eqref{eq2} in the first paragraph of this section, it is best to apply
vector extrapolation methods in the so-called {\em cycling} mode, and this has been the usual practice.  This mode of usage of vector extrapolation methods, which we  shall call the {\em C-Mode}, can be described via the following steps:
\bigskip

\subsection*{\underline{\bf C-Mode}}
\begin{enumerate}
\item  [\textnormal{C0.}] Choose integers $n\geq 0$ and  $k\geq
1$ and an initial vector $\xx_0$.
\item [\textnormal{C1.}]
Compute the vectors $\xx_1,\xx_2,\ldots,\xx_M$ [via
$\xx_{m+1}=\ff(\xx_m)$].\footnote{Note that $M=n+k+1$ for MPE, RRE, MMPE, and SVD-MPE, while
$M=n+2k$ for SEA, VEA, and  TEA.  \label{f1}}
\item
[\textnormal{C2.}] Apply the   extrapolation method
 to the vectors
$\xx_n,\xx_{n+1},\ldots,\xx_{M}$, and compute $\sss_{n,k}$.
 \item [\textnormal{C3.}]
If $\sss_{n,k}$ satisfies the accuracy test, stop.\\
Otherwise, set $\xx_0=\sss_{n,k}$ and go to step C1.
\end{enumerate}
 We  call each application of steps C1--C3  a {\em cycle} and
denote by $\sss^{(r)}$ the $\sss_{n,k}$ computed in
the $r$th cycle. We will also denote the initial vector $\xx_0$ in
step C0 by $\sss^{(0)}$. Under suitable conditions, it has been shown rigorously
for MPE and RRE that the
sequence $\{\sss^{(r)}\}^\infty_{r=0}$ has very good
convergence properties when $\ff(\xx)$ is linear.
See  \cite{Sidi:1991:UBC}, \cite{Sidi:1998:UBC}. See also  \cite[Chapter 7]{Sidi:2017:VEM}.
The case in which $\ff(\xx)$ is nonlinear has proved to be  complicated and has not been  resolved till the present.

In some cases, RRE stalls if applied in the C-Mode  with $n=0$, in the sense  that it takes too many iterations until one sees meaningful convergence; in such cases, even a moderate $n>0$ can be very helpful to accelerate convergence effectively. See the numerical examples in \cite{Sidi:1991:UBC}, \cite{Sidi:1998:UBC}.
\bigskip

A different cycling procedure involving the minimal polynomial of the (constant) Jacobian matrix $\FF(\sss)$ with respect to a nonzero vector\footnote{Given a  nonzero vector $\uu\in\C^N,$ the monic polynomial
$P(\lambda)$ is said to be a {\em minimal polynomial of the matrix $\TT\in\C^{N\times N}$ with respect to $\uu$}
if $P(\TT)\uu=\00$ and if $P(\lambda)$ has
smallest degree.\\
The polynomial $P(\lambda)$ exists and is unique. Moreover, if $P_1(\TT)\uu = \00$ for some polynomial $P_1(\lambda)$ with $\deg P_1 > \deg P$, then $P(\lambda)$ divides $P_1(\lambda)$. In particular, $P(\lambda)$ divides
the minimal polynomial of $\TT$, which in turn divides the characteristic polynomial of $\TT$.
[Thus, the degree of $P(\lambda)$ is at most $N$ and its zeros are some or all of the eigenvalues of $\TT$.]}
  has been considered in various publications. The description of this procedure, which we  shall call the {\em MC-Mode}, is as follows:
\bigskip

\subsection*{\underline{\bf MC-Mode}}
\begin{enumerate}
\item  [\textnormal{MC0.}] Choose an integer $n\geq0$ and an initial vector $\xx_0$.
\item [\textnormal{MC1.}] Compute the vectors $\xx_1,\xx_2,\ldots,\xx_M$ [via
$\xx_{m+1}=\ff(\xx_m)$], $M$ being as explained in footnote$^{\ref{f1}}$, with $k$ there being the
degree of the minimal polynomial of $\FF(\sss)$  with respect to $\eepsilon_n=\xx_{n}-\sss$.\footnote{It is clear that to apply any of the extrapolation methods in this mode, one needs to know the matrix $\FF(\sss)$, for which one also needs to know the solution $\sss$.}
\item
[\textnormal{MC2.}]
Apply the   extrapolation method  to the vectors
$\xx_n,\xx_{n+1},\ldots,\xx_{M}$, and compute $\sss_{n,k}$.
 \item [\textnormal{MC3.}]
If $\sss_{n,k}$ satisfies the accuracy test, stop.\\
Otherwise, set $\xx_0=\sss_{n,k}$ and go to step MC1.
\end{enumerate}
As before, we  call each application of steps MC1--MC3  a {\em cycle} and
denote by $\sss^{(r)}$ the $\sss_{n,k}$ computed in
the $r$th cycle.\footnote{Note that $k$ is not necessarily fixed in this mode of cycling; it may vary from one cycle to the next. It always satisfies $k\leq N$, however.} We will also denote the initial vector $\xx_0$ in
step MC0 by $\sss^{(0)}$. It is observed  in many numerical examples that
 the
sequence $\{\sss^{(r)}\}^\infty_{r=0}$ converges quadratically to the solution $\sss$ of the system $\xx=\ff(\xx)$ when $\ff(\xx)$ is nonlinear.\footnote{\label{foot5}Quadratic convergence is relevant only when $\ff(\xx)$ is nonlinear. When $\ff(\xx)$ is linear, that is, $\ff(\xx)=\TT\xx+\dd$, where $\TT$ is a fixed $N\times N$ matrix and $\dd$ is a fixed vector, hence $\FF(\sss)=\TT$, the solution $\sss$ is obtained already at the end of step MC2 of  the first cycle, that is, we have $\sss^{(1)}=\sss$. Therefore, there is nothing to analyze  when $\ff(\xx)$ is linear.}
The first papers dealing with this topic (that is the MC-Mode with $\sss_{0,k}$ only) are those by Brezinski \cite{Brezinski:1970:AEA}, \cite{Brezinski:1971:ARS},   Gekeler \cite{Gekeler:1972:SSE}, and Skelboe \cite{Skelboe:1980:CPS}. Of these, \cite{Brezinski:1970:AEA}, \cite{Brezinski:1971:ARS},  and \cite{Gekeler:1972:SSE}  consider the application of the epsilon algorithms,  while \cite{Skelboe:1980:CPS} also considers the application of MPE and RRE.
The quadratic convergence proofs in all of these papers have a gap in that
they all end up with the relation
$$\|\sss^{(r+1)}-\sss\|_2\leq K_r\|\sss^{(r)}-\sss\|_2^2,$$
from which they  conclude that
$\{\sss^{(r)}\}^\infty_{r=0}$ converges quadratically. However, $K_r$ is a scalar that depends on $r$ through $\sss^{(r)}$, and the proofs do not show how it depends on $r$. In particular, they do not show whether $K_r$ is  bounded in $r$ or how it grows with $r$ if it is not bounded. This gap was disclosed in the review paper of Smith, Ford, and Sidi \cite{Smith:1987:EMV}.

A more recent paper by Jbilou and Sadok \cite{Jbilou:1991:SRA} deals with the same MC-Mode cycling via MPE and RRE. Yet another paper by Le Ferrand \cite{LeFerrand:1992:CTE} treats TEA.
Both these works provide  proofs of quadratic convergence by imposing  some global  conditions on the whole sequence $\{\sss^{(r)}\}^\infty_{r=0}$ as well as on $\ff(\xx)$.   (See also Laurens and Le Ferrand \cite{Laurens:1995:FIV}.)

In this work, we present a new  convergence
 study of RRE  when it is being applied to nonlinear systems. Specifically, we treat the convergence of RRE (i)\,in the $n$-Mode,  and (ii)\,in the two cycling modes mentioned above. By making a {\em global} assumption, we are able to prove convergence in all cases. We can justify heuristically the
 plausibility of this assumption;  we do not have a rigorous justification for it, however. This difficulty is  inherent to  all studies. We  explore the source of this difficulty here.
It must be mentioned that the difficulties that exist in the previous papers mentioned above are similar to  ours, although they take  different forms. Whether and how we can circumvent these difficulties  is not clear at this time.

The plan of this paper is as follows:
In Section \ref{se2}, we give a brief description of RRE, which is needed throughout.
In Section \ref{se3}, we derive a formula for the error vector $\sss_{n,k}-\sss$ when the vectors $\xx_m$ are generated via \eqref{eq2} with a nonlinear $\ff(\xx)$. In Section~\ref{se4},
we use this error formula  to derive an  upper bound on $\|\sss_{n,k}-\sss\|$. In Section~\ref{se5}, we complete the convergence studies of RRE in the different modes mentioned above. We mention that our results concerning the convergence of RRE in the $n$-Mode and the C-Mode are the first ones in the literature of extrapolation methods. In our study, we make much use of the results presented in Sidi \cite{Sidi:1988:EVP} throughout these studies. In Section~\ref{se6}, we discuss the nature of the problem/difficulty mentioned above and compare our global assumption with that  of \cite{Jbilou:1991:SRA}.
 In the appendix, we  review some
known theorems concerning Moore--Penrose generalized inverses of perturbed matrices, which we use in Section \ref{se4}.
(For generalized inverses, see  Ben-Israel and Greville \cite{Ben-Israel:2003:GIT} and Campbell and Meyer \cite{Campbell:1991:GIL}, for example.)

 Throughout this work,  we will use lowercase boldface italic letters to denote vectors and we will use uppercase  boldface italic letters to denote matrices.

Finally, we mention that in our study of RRE, we employ two different vector norms:
 \begin{description}
 \item [(i)] The standard $l_2$ vector norm defined
via $\|\zz\|_2=\sqrt{\zz^*\zz}$  and the matrix norm induced by it, namely,
$\|\AAA\|_2=\sigma_\text{max}(\AAA)$, where $\sigma_\text{max}(\AAA)$ is the largest singular value of the matrix $\AAA$.
\item [(ii)]
The $\GG$ norm defined via $\|\zz\|=\|\GG\zz\|_2$ and the  matrix norm $\|\AAA\|$ induced by it, where $\GG=\II-\FF(\sss)$. Note that $\GG$ is nonsingular since $\FF(\sss)$ does not have unity as an eigenvalue; therefore,
the $\GG$ norm is a true vector norm.
\end{description}
 Of course, the two vector norms are equivalent and we have
\beq\label{equiv}\frac{1}{\|\GG^{-1}\|_2}\|\zz\|_2\leq \|\zz\|\leq \|\GG\|_2\,\|\zz\|_2.\eeq
When $\AAA$ is an $N\times N$  (square) matrix,  we have
$\|\AAA\|=\|\GG\AAA\GG^{-1}\|_2$. We make extensive use of these connections between the two norms, $\|\cdot\|$ and $\|\cdot\|_2$, in the sequel.

\section{Description of RRE} \label{se2}
\setcounter{equation}{0}
\setcounter{theorem}{0}
Consider the system of equations given in \eqref{eq1}--\eqref{eq1w}, and let
 the sequence $\{\xx_m\}$ be  generated via the fixed-point iterative scheme in \eqref{eq2}.

Define the first and second order differences of the $\xx_m$ as in
\beq \label{eq12} \uu_m=\xx_{m+1}-\xx_m,\quad \ww_m=\uu_{m+1}-\uu_m=\xx_{m+2}-2\xx_{m+1}+\xx_m,\quad m=0,1,\ldots,\eeq
and, for some fixed $n\geq0$, form the $N\times (j+1)$ matrices
\beq \label{eq13} \UU_j=[\,\uu_n\,|\,\uu_{n+1}\,|\,\cdots\,|\,\uu_{n+j}\,],\quad \WW_j=[\,\ww_n\,|\,\ww_{n+1}\,|\,\cdots\,|\,\ww_{n+j}\,],\quad j=0,1,\ldots.\eeq
Then the  $\gamma_i$ in \eqref{eq3} for RRE are  the solution to the constrained standard $l_2$ minimization problem
\beq\label{eq14}
\min_{\gamma_0,\gamma_1,\ldots,\gamma_k}\bigg\|\sum^k_{i=0}\gamma_i\uu_{n+i}\bigg\|_2\quad \text{subject to}\ \sum^k_{i=0}\gamma_i=1,\eeq
which can also be expressed in matrix terms as

\beq \label{eq15}
\min_{\ggamma}\|\UU_k\ggamma\|_2\quad \text{subject to}\ \sum^k_{i=0}\gamma_i=1;\quad \ggamma=[\gamma_0,\gamma_1,\ldots,\gamma_k]^T\in\C^{k+1}. \eeq
Then, with the solution $\ggamma$ of this problem, the RRE approximation $\sss_{n,k}$ is given as in
\beq \label{eq16} \sss_{n,k}=\sum^k_{i=0}\gamma_i\,\xx_{n+i}.\eeq

Noting that
$$\xx_{n+m}=\xx_n+\sum^{m-1}_{j=0}\uu_{n+j}, \quad \uu_{n+m}=\uu_n+\sum^{m-1}_{j=0}\ww_{n+j},\quad m=0,1,\ldots, $$
we can  reexpress $\sss_{n,k}$ and $\UU_k\ggamma$  as
\beq\label{eq18} \sss_{n,k}=\xx_n+\sum^{k-1}_{j=0}\xi_j\,\uu_{n+j}=\xx_n+\UU_{k-1}\xxi,\quad \UU_k\ggamma=\uu_n+\sum^{k-1}_{j=0}\xi_j\ww_{n+j}=\uu_n+\WW_{k-1}\xxi, \eeq where
\beq\label{eq19} \xxi=[\xi_0,\xi_1,\ldots,\xi_{k-1}]^T\in\C^k;\quad \xi_j=\sum^k_{i=j+1}\gamma_i,\quad j=0,1,\ldots,k-1.\eeq
The (constrained)  minimization problem for the vector $\ggamma$ in \eqref{eq15}
can now be replaced by the following (unconstrained)  minimization problem for the vector $\xxi$  in \eqref{eq18}:
\beq \label{eq20} \min_{\xxi}\|\uu_n+\WW_{k-1}\xxi\|_2,\quad \xxi=[\xi_0,\xi_1,\ldots,\xi_{k-1}]^T\in\C^k.\eeq
Now, the solution to this problem (for $\xxi$) is simply $-\WW{}^+_{k-1}\uu_n$, where $\KK^+$ stands for the Moore--Penrose generalized inverse of the matrix $\KK$. Upon substituting this into \eqref{eq18}, we obtain

\beq\label{eq21}\boxed{ \sss_{n,k}=\xx_n-\UU_{k-1}\WW{}^+_{k-1}\uu_n.}\eeq
We will be making use of this representation of $\sss_{n,k}$ in the sequel.
For the above developments, see  Sidi \cite{Sidi:1986:CSP}.

\section{An error formula for RRE} \label{se3}
\setcounter{equation}{0}
\setcounter{theorem}{0}

\subsection{RRE on the linear system $\xx=\sss+{\FF}(\sss)(\xx-\sss)$} \label{sse31}
Let us now consider the linear system
\beq \label{eq40} \xx=\tilde{\ff}(\xx),\quad \tilde{\ff}(\xx)=\sss+{\FF}(\sss)(\xx-\sss),\eeq
where $\FF(\sss)$ is the Jacobian matrix of $\ff$ evaluated at $\sss$, as given in \eqref{eq:jacobian}.
Note that $\tilde{\ff}(\xx)$ is simply the linear part of the Taylor series of $\ff(\xx)$ in \eqref{eq1} about $\sss$.
 Clearly, $\sss$ is the solution to \eqref{eq40} since  $\tilde{\ff}(\sss)=\sss$.

 With the vectors  $\xx_0,\xx_1,\ldots,\xx_n$ generated nonlinearly as in \eqref{eq2} of  the preceding section, let
\beq \label{eq444}\tilde{\xx}_n=\xx_n\quad\text{and}\quad \tilde{\xx}_{m+1}=\tilde{\ff}(\tilde{\xx}_{m}),\quad m=n,n+1,\ldots.\eeq
Following this, define
\beq  \tilde{\eepsilon}_m=\tilde{\xx}_{m}-\sss,\quad
\tilde{\uu}_m=\tilde{\xx}_{m+1}-\tilde{\xx}_m,\quad \tilde{\ww}_m=\tilde{\uu}_{m+1}-\tilde{\uu}_m,\quad m=n,n+1,\ldots,\eeq
\beq \label{eq13f} \tilde{\UU}_j=[\,\tilde{\uu}_n\,|\,\tilde{\uu}_{n+1}\,|\,\cdots\,|\,\tilde{\uu}_{n+j}\,],\quad \tilde{\WW}_j=[\,\tilde{\ww}_n\,|\,\tilde{\ww}_{n+1}\,|\,\cdots\,|\,\tilde{\ww}_{n+j}\,],\quad j=0,1,\ldots.\eeq
Then, by \eqref{eq21},  the vector $\tilde{\sss}_{n,k}$ produced by applying RRE to the sequence $\{\tilde{\xx}_m\}$ is
\beq \label{eq81a} \boxed{ \tilde{\sss}_{n,k}=\tilde{\xx}_n-\tilde{\UU}_{k-1}\tilde{\WW}{}^+_{k-1}\tilde{\uu}_n.}\eeq
Upon subtracting $\sss$ from both sides of this equality and invoking $\tilde{\eepsilon}_n=\tilde{\xx}_n-{\sss}$, we obtain the error formula
\beq \label{eq81} \boxed{\tilde{\sss}_{n,k}-\sss=\tilde{\eepsilon}_n-\tilde{\UU}_{k-1}\tilde{\WW}{}^+_{k-1}\tilde{\uu}_n.}\eeq

 The error $\tilde{\sss}_{n,k}-\sss$   has been studied in detail in \cite{Sidi:1986:CSP},
 \cite{Sidi:1994:CIR},  \cite{Sidi:1988:CSA},
 \cite{Sidi:1991:UBC}; for a summary,
 see \cite[Chapters 6,7]{Sidi:2017:VEM}.\footnote{See also Sidi and Shapira \cite{Sidi:1998:UBC}
concerning a  modified version of restarted GMRES with prior Richardson iterations, that is very closely related to RRE.}

The following result from \cite[Theorem 4.2]{Sidi:1988:EVP} concerning the application of RRE to vector sequences from fixed-point iteration  of {\em linear} systems will be crucial in our analysis
of RRE concerning  {\em nonlinear} systems in Section \ref{se5}.

\begin{theorem} \label{thopt}
 Denote $\tilde{\FF}=\FF(\sss)$ for short; thus
 $\GG=\II-\tilde{\FF}$. Then the vector $\tilde{\sss}_{n,k}$ is the solution to the optimization problem
\begin{gather}  \|\tilde{\sss}_{n,k}-\sss\|=\|\GG(\tilde{\sss}_{n,k}-\sss)\|_2
=\min_{g\in\tilde{\cal P}_k}\|g(\tilde{\FF})\GG(\tilde{\xx}_n-\sss)\|_2, \notag \\
\tilde{\cal P}_k=\bigg\{g(z)=\sum^k_{j=0}\alpha_j z^j:\ \ g(1)=1\bigg\}. \label{eqopt} \end{gather}
\end{theorem}
\medskip

\noindent{\bf Remarks:}
\begin{enumerate}
\item
If $k$ is the degree of the minimal polynomial of ${\FF}(\sss)$ with respect to the vector $\tilde{\eepsilon}_n$, then $\tilde{\sss}_{n,k}=\sss$, the solution to \eqref{eq40}. See footnote$^{\ref{foot5}}$.

\item
Concerning Theorem \ref{thopt}, note that the vector $\GG(\yy-\sss)$ is simply the {\em residual}  of the vector $\yy$ for the linear system $\xx=\tilde{\ff}(\xx)$ because
$$\GG(\yy-\sss)=\yy-\tilde{\ff}(\yy),$$  and also
 $$\yy-\tilde{\ff}(\yy)=\00\quad \Leftrightarrow\quad
\yy=\sss, \quad \text{since $\GG$ is nonsingular.}$$
Thus, what Theorem \ref{thopt} means is that $\|\GG(\tilde{\sss}_{n,k}-\sss)\|_2$, the $l_2$ norm of the residual vector of  $\tilde{\sss}_{n,k}=\sum^k_{i=0}\tilde{\gamma}_i\tilde{\xx}_{n+i}$ subject to $\sum^k_{i=0}\tilde{\gamma}_i=1$, is the smallest of all the $l_2$ norms of the residuals of  the vectors
$\sum^k_{i=0}\alpha_i\tilde{\xx}_{n+i}$ subject to  $\sum^k_{i=0}\alpha_i=1$. Here we also recall that $\tilde{\xx}_n=\xx_n$ by \eqref{eq444}.
\end{enumerate}

\subsection{RRE on the nonlinear system $\xx=\ff(\xx)$}\label{sse32}

In the Introduction, we assumed  that $\ff(\xx)$ is twice continuously
 differentiable in a neighborhood of the solution $\sss$. We also assumed that $\rho(\tilde{\FF})<1$, where we recall $\tilde{\FF}= \FF(\sss)$, thus ensuring the convergence of the sequence $\{\xx_m\}$ to $\sss$. We now  assume, in addition, that
$\|\tilde{\FF}\|_2<1$ too and define  the ball $B(\sss,\delta)$  containing $\sss$ in its interior via
\beq\label{eq31} B(\sss,\delta)=\{\xx: \|\xx-\sss\|\equiv\|\GG(\xx-\sss)\|_2\leq \delta\}.\eeq
Clearly, $B(\sss,\delta)$ is a convex set. In addition, we assume $\ff(\xx)$ is twice continuously  differentiable in  $B(\sss,\delta)$.

\begin{lemma} \label{le32}For all $\delta$ sufficiently small, there exists a positive constant $L<1$ independent of $\delta$, such that
\beq \label{eq33} \|\xx_{m+1}-\sss\|\leq L \|\xx_{m}-\sss\|, \quad m=0,1,\ldots, \quad \text{provided\  $\xx_0\in B(\sss,\delta)$.}\eeq
Consequently, the whole sequence $\{\xx_m\}$ is in $B(\sss,\delta)$ and converges to $\sss$.
\end{lemma}
\begin{proof}
We begin with the following result that follows from Ortega and Rheinboldt \cite[p. 69]{Ortega:1970:ISN}:
$$ \ff(x)-\ff(\sss)=\int^1_0 \FF(\sss+t(\xx-\sss))(\xx-\sss)\,dt\quad \text{provided\ $\xx\in B(\sss,\delta)$}.$$
It is important to note that $\sss+t(\xx-\sss)$, with $t\in[0,1]$, is a convex combination of $\xx$ and $\sss$ hence is also in $B(\sss,\delta)$.
Multiplying both sides of this equality on the left by $\GG$, we obtain
$$ \GG[\ff(\xx)-\ff(\sss)]=\int^1_0 [\GG\FF(\sss+t(\xx-\sss))\GG^{-1}]\,[\GG(\xx-\sss)]\,dt,$$
which, upon taking $l_2$ norms on both sides and invoking the known fact that
$$ \bigg\|\int^b_a \uu(\xi)\,d\xi\bigg\|_2\leq \int^b_a\|\uu(\xi)\|_2\,d\xi,\quad \uu(\xi)\in \C^N,$$
 gives
$$\|\GG[\ff(\xx)-\ff(\sss)]\|_2\leq   \int^1_0 \|\GG\FF(\sss+t(\xx-\sss))\GG^{-1}\|_2\,\|\GG(\xx-\sss)\|_2\,dt. $$
Finally, invoking in this last inequality  $\|\GG\zz\|_2=\|\zz\|$ and the fact that $\|\GG\AAA\GG^{-1}\|_2=\|\AAA\|$, we obtain
\begin{align} \label{eq888} \|\ff(\xx)-\ff(\sss)\| &\leq \bigg(\int^1_0  \|\FF(\sss+t(\xx-\sss))\|\,dt\bigg)\|\xx-\sss\|\notag \\
&\leq \bigg[\max_{0\leq t\leq 1} \|\FF(\sss+t(\xx-\sss))\|\bigg]\,
\|\xx-\sss\|\notag \\
&\leq \bigg[\max_{\zz\in B(\sss,\delta)} \|\FF(\zz)\|\bigg]\,
\|\xx-\sss\|.\end{align}
Now, by the fact that $\ff(\xx)$ is twice differentiable in $B(\sss,\delta)$, it follows   that
\beq \label{eq887}\FF(\xx)=\FF(\sss+(\xx-\sss))=\FF(\sss)+\DDelta(\xx-\sss),\eeq where the matrix $\DDelta(\xx-\sss)$ satisfies
 \beq \label{eq886}\|\DDelta(\xx-\sss)\|\leq \alpha \|\xx-\sss\| \quad\text{for some $\alpha>0$ independent of $\xx\in B(\sss,\delta)$.}\eeq
Taking norms on  both sides of \eqref{eq887},  realizing that $\|\FF(\sss)\|=\|\FF(\sss)\|_2$ because $\FF(\sss)$ and $\GG=\II-\FF(\sss)$ commute, and invoking $\xx\in B(\sss,\delta)$, we have
\beq  \|\FF(\xx)\|=\|\GG\FF(\xx)\GG^{-1}\|_2\leq \|\FF(\sss)\|_2+\alpha\|\xx-\sss\|\leq
 \|\FF(\sss)\|_2+\alpha\delta\quad \forall \ \xx\in B(\sss,\delta).\eeq
Since we have assumed that $\|\FF(\sss)\|_2<1$, we can choose $\delta$  sufficiently small to cause
\beq \label{eq31k}\max_{\xx\in B(\sss,\delta)}\|\FF(\xx)\|=L<1.\eeq
With this, \eqref{eq888} becomes
\beq \label{eq32} \|\ff(\xx)-\ff(\sss)\|\leq L \|\xx-\sss\|\quad \forall\ \xx\in B(\sss,\delta).\eeq
The  proof of \eqref{eq33} for the sequence $\{\xx_m\}$ can now be carried out by letting $\xx=\xx_m$ in \eqref{eq32},  recalling that $\ff(\xx_m)=\xx_{m+1}$ and  $\ff(\sss)=\sss$, and then  proceeding by induction on $m$.
\end{proof}

In the sequel, we adopt the shorthand notation
\beq \label{eq30} \eepsilon_m=\xx_m-\sss,\quad m=0,1,\ldots;\quad \tilde{\FF}=\FF(\sss).\eeq
We also make use of the fact that $\|\tilde{\FF}\|\leq L<1$, which follows from \eqref{eq31k}, and, along with  \eqref{eq33}, guarantees that the sequence $\{\|\eepsilon_m\|\}$   decreases monotonically and converges to zero.

Expanding $\ff(\xx)$ in a Taylor series about the solution $\sss$ and using the fact that $\ff(\sss)=\sss$ and $\ff\in C^2(B(\sss,\delta))$, we have
 \beq \label{eq35} \ff(\xx)=\sss+\tilde{\FF}\cdot(\xx-\sss)+\mmu(\xx-\sss),\eeq
 where
\beq \label{eq35a}  \|\mmu(\xx-\sss)\|\leq a\,
\|\xx-\sss\|^2\quad \forall \, \xx\in B(\sss,\delta), \quad \text{for some $a>0$.}
\eeq
Consequently,
\beq \label{eq36} \xx_{m+1}=\ff(\xx_m)=\sss+\tilde{\FF}\eepsilon_m+\mmu(\eepsilon_m)\quad\Rightarrow\quad
\eepsilon_{m+1}=\tilde{\FF}\eepsilon_m+\mmu(\eepsilon_m).\eeq
Then, by induction,
\beq \label{eq37} \eepsilon_{n+i}=\tilde{\FF}{}^i\eepsilon_n+\sum^{i-1}_{j=0}\tilde{\FF}{}^{i-j-1}\mmu(\eepsilon_{n+j}),
\quad  i=0,1,2,\ldots.\eeq

\begin{lemma} \label{le1} The vectors $\eepsilon_m$, $\uu_m,$ and $\ww_m$ satisfy
\beq \label{eq38}\eepsilon_{n+i}=\tilde{\FF}{}^i\eepsilon_n+\check{\eepsilon}_{n+i};\quad \|\check{\eepsilon}_{n+i}\|\leq C_i\|\eepsilon_n\|^2,\quad C_i=a \frac{1-L^i}{1-L}L^{i-1},\eeq
\beq \label{eq38a}\uu_{n+i}=(\tilde{\FF}-\II)\tilde{\FF}{}^i\eepsilon_n+\check{\uu}_{n+i};\quad
\|\check{\uu}_{n+i}\|\leq D_i\|\eepsilon_n\|^2,\quad D_i=C_i+C_{i+1},\eeq
\beq
\label{eq39}\ww_{n+i}=(\tilde{\FF}-\II)^2\tilde{\FF}{}^i\eepsilon_n+\check{\ww}_{n+i};\quad
\|\check{\ww}_{n+i}\|\leq E_i\|\eepsilon_n\|^2,\quad E_i=C_i+2C_{i+1}+C_{i+2}.\eeq
\end{lemma}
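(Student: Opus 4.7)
The plan is to derive all three formulas from the single identity \eqref{eq37}, so everything reduces to bookkeeping with the triangle inequality and the geometric bound $\|\tilde{\FF}{}^j\|\leq L^j$ coming from \eqref{eq31k}.

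First I would treat \eqref{eq38}. From \eqref{eq37} the natural choice is
$$\check{\eepsilon}_{n+i}=\sum_{j=0}^{i-1}\tilde{\FF}{}^{i-j-1}\mmu(\eepsilon_{n+j}).$$
To bound it, I would combine three ingredients: $\|\tilde{\FF}{}^{i-j-1}\|\leq L^{i-j-1}$; the quadratic remainder bound \eqref{eq35a}, $\|\mmu(\eepsilon_{n+j})\|\leq a\|\eepsilon_{n+j}\|^2$; and the contractive estimate $\|\eepsilon_{n+j}\|\leq L^j\|\eepsilon_n\|$ from \eqref{eq34}. Taking norms inside the sum gives
$$\|\check{\eepsilon}_{n+i}\|\leq a\|\eepsilon_n\|^2\sum_{j=0}^{i-1}L^{i-j-1}L^{2j}=a\|\eepsilon_n\|^2\, L^{i-1}\sum_{j=0}^{i-1}L^{j},$$
and summing the geometric series reproduces $C_i=a(1-L^i)L^{i-1}/(1-L)$.

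The remaining two formulas will then come for free by differencing. Since $\uu_{n+i}=\eepsilon_{n+i+1}-\eepsilon_{n+i}$, substituting \eqref{eq38} gives $\tilde{\FF}{}^{i+1}\eepsilon_n-\tilde{\FF}{}^i\eepsilon_n=\tilde{\FF}{}^i(\tilde{\FF}-\II)\eepsilon_n$ for the principal part and forces the identification $\check{\uu}_{n+i}=\check{\eepsilon}_{n+i+1}-\check{\eepsilon}_{n+i}$. The triangle inequality instantly yields $\|\check{\uu}_{n+i}\|\leq (C_i+C_{i+1})\|\eepsilon_n\|^2=D_i\|\eepsilon_n\|^2$. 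Applying exactly the same differencing once more, i.e.\ $\ww_{n+i}=\uu_{n+i+1}-\uu_{n+i}$, gives $\tilde{\FF}{}^i(\tilde{\FF}-\II)^2\eepsilon_n$ for the principal part (using $(\tilde{\FF}{}^{i+1}-\tilde{\FF}{}^i)(\tilde{\FF}-\II)=\tilde{\FF}{}^i(\tilde{\FF}-\II)^2$) and the remainder $\check{\ww}_{n+i}=\check{\eepsilon}_{n+i+2}-2\check{\eepsilon}_{n+i+1}+\check{\eepsilon}_{n+i}$, whose norm is bounded by $(C_i+2C_{i+1}+C_{i+2})\|\eepsilon_n\|^2=E_i\|\eepsilon_n\|^2$.

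There is no real obstacle; the lemma is essentially a perturbation-of-the-linearization statement and the three bounds are nested consequences of the one for $\check{\eepsilon}_{n+i}$. The only thing one has to be careful about is that all of \eqref{eq31k}, \eqref{eq35a}, and \eqref{eq34} are valid simultaneously, which is ensured by the standing assumption that $\xx_0\in B(\sss,\delta)$ so the entire iterate sequence lives in $B(\sss,\delta)$ and the linearization plus quadratic remainder holds on every $\xx_{n+j}$ that appears in the sums above.
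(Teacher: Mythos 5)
Your proof is correct and follows essentially the same route as the paper: bound $\check{\eepsilon}_{n+i}=\sum_{j=0}^{i-1}\tilde{\FF}{}^{i-j-1}\mmu(\eepsilon_{n+j})$ via $\|\tilde{\FF}\|\leq L$, the quadratic remainder estimate, and $\|\eepsilon_{n+j}\|\leq L^j\|\eepsilon_n\|$, then obtain the other two bounds by first and second differencing of $\check{\eepsilon}_m$. The paper's proof is identical in substance (it even leaves the differencing details to the reader), so nothing further is needed.
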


\noindent{\bf Remark:} Note that $C_0=0$ and $C_1=a$ by \eqref{eq38}. Therefore,  $D_0=a$ by \eqref{eq38a}.\\

\begin{proof}
We start by noting that, by \eqref{eq37},
$$ \check{\eepsilon}_{n+i}=\sum^{i-1}_{j=0}\tilde{\FF}{}^{i-j-1}\mmu(\eepsilon_{n+j}),$$
which, upon taking norms and invoking $\|\tilde{\FF}\|\leq L$ and \eqref{eq35a}, gives
$$\|\check{\eepsilon}_{n+i}\|\leq \sum^{i-1}_{j=0}\|\tilde{\FF}{}^{i-j-1}\|\,\|\mmu(\eepsilon_{n+j})\|
\leq \sum^{i-1}_{j=0} L^{i-j-1}\,a\,(L^j\|\eepsilon_n\|)^2
=a\bigg(\sum^{i-1}_{j=0} L^{i+j-1}\bigg)\|\eepsilon_n\|^2,$$
from which \eqref{eq38} follows.

The proofs of \eqref{eq38a}--\eqref{eq39} follow  from \eqref{eq38} and the observation that
$$ \check{\uu}_m=\check{\eepsilon}_{m+1}-\check{\eepsilon}_m\quad\text{and}\quad  \check{\ww}_m=\check{\eepsilon}_{m+2}-2\check{\eepsilon}_{m+1}+\check{\eepsilon}_m.$$
We leave the details to the reader.
\end{proof}

Let us now go back to  the linear system $\xx=\tilde{\ff}(\xx)$ in \eqref{eq40},  recalling  that $\FF(\sss)=\tilde{\FF}$.
As already explained, $\tilde{\ff}(\xx)$ is simply the linear part of the Taylor series of $\ff(\xx)$ about $\sss$, obtained from \eqref{eq35} by letting $\mmu(\yy)\equiv\00$ there.  In addition, $\tilde{\ff}(\sss)=\sss$, that is, $\sss$ is the solution to $\xx=\tilde{\ff}(\xx)$, as well as $\xx={\ff}(\xx)$.
Let us now note  that $\mmu(\yy)\equiv\00$ also implies that $\check{\eepsilon}_m=\00$, $\check{\uu}_m=\00$, and $\check{\ww}_m=\00$
in \eqref{eq38}, \eqref{eq38a}, and \eqref{eq39}, respectively.  Recalling also that
$\tilde{\eepsilon}_n=\eepsilon_n$, we finally realize that, for $i=0,1,\ldots,$
\beq \label{eq41a}\tilde{\eepsilon}_{n+i}=\tilde{\FF}{}^i\tilde{\eepsilon}_n=
\tilde{\FF}{}^i\eepsilon_n, \quad \tilde{\uu}_{n+i}=\tilde{\FF}{}^i\tilde{\uu}_n=
(\tilde{\FF}-\II)\tilde{\FF}{}^i\eepsilon_n,\quad
\tilde{\ww}_{n+i}=\tilde{\FF}{}^i\tilde{\ww}_n=
(\tilde{\FF}-\II)^2\tilde{\FF}{}^i\eepsilon_n;
\eeq
consequently,
\beq\label{eq41h}\uu_{n+i}=\tilde{\uu}_{n+i}+\check{\uu}_{n+i}, \quad
\ww_{n+i}=\tilde{\ww}_{n+i}+\check{\ww}_{n+i}.\eeq
As a result of all this, we have
\beq\label{eq42} \UU_{k-1}=\tilde{\UU}_{k-1}+\check{\UU}_{k-1},\quad
\check{\UU}_{k-1}=[\,\check{\uu}_n\,|\,\check{\uu}_{n+1}\,|\,\cdots\,|\,\check{\uu}_{n+k-1}\,] \eeq
and
\beq \label{eq43} \WW_{k-1}=\tilde{\WW}_{k-1}+\check{\WW}_{k-1},\quad
\check{\WW}_{k-1}=[\,\check{\ww}_n\,|\,\check{\ww}_{n+1}\,|\,\cdots\,|\,\check{\ww}_{n+k-1}\,],\eeq
with $\UU_j$ and  $\WW_j$ as in \eqref{eq13}.
For  simplicity of notation, in what follows, we drop the subscript $k-1$ from the matrices $\UU_{k-1}$, $\WW_{k-1}$, $\tilde{\UU}_{k-1}$, $\tilde{\WW}_{k-1}$, etc.
 With these, \eqref{eq21} becomes
\begin{align} \sss_{n,k}&=\xx_n-\UU\WW{}^+\uu_n \notag\\
&=\xx_n-(\tilde{\UU}+\check{\UU})(\tilde{\WW}+\check{\WW})^+(\tilde{\uu}_n+\check{\uu}_n).\label{eq44} \end{align}
Letting also
\beq \label{eq48f} \HH=\WW{}^+-\tilde{\WW}{}^+=(\tilde{\WW}+\check{\WW}){}^+-\tilde{\WW}{}^+,\eeq
we   rewrite \eqref{eq44} in the form
\beq \label{eq48}
\sss_{n,k}=\xx_n-(\tilde{\UU}+\check{\UU})(\tilde{\WW}{}^++\HH)(\tilde{\uu}_n+\check{\uu}_n).\eeq
Next, opening the parentheses in \eqref{eq48}, we obtain the equality
\beq \label{eq82}
\sss_{n,k}=\xx_n-\tilde{\UU}\tilde{\WW}{}^+\tilde{\uu}_n
-\tilde{\UU}\tilde{\WW}{}^+\check{\uu}_n
-(\tilde{\UU}\HH+\check{\UU}\tilde{\WW}{}^++\check{\UU}\HH)(\tilde{\uu}_n+\check{\uu}_n).\eeq
 Now, by the fact that $\xx_n=\tilde{\xx}_n$ and by  \eqref{eq81a}, we have that
 $\xx_n-\tilde{\UU}\tilde{\WW}{}^+\tilde{\uu}_n=\tilde{\sss}_{n,k}$ in this equality. Next, we invoke
$\uu_n=\tilde{\uu}_n+\check{\uu}_n$ and $\UU=\tilde{\UU}+\check{\UU}$ again, and
 obtain a convenient representation of $\sss_{n,k}$ and the error in it. We summarize all this in the following lemma.

\begin{lemma}\label{le90} Let
\beq\label{scheck}
\check{\sss}_{n,k}=-\tilde{\UU}\tilde{\WW}{}^+\check{\uu}_n
-(\UU\HH+\check{\UU}\tilde{\WW}{}^+)\uu_n.\eeq
Then, $\sss_{n,k}$ is given by the equality
\beq \label{eq82c}
\sss_{n,k}= \tilde{\sss}_{n,k}+\check{\sss}_{n,k}.\eeq
Subtracting $\sss$ from both sides of this equality, we also obtain the error formula
\beq \label{eq87}
\sss_{n,k}-\sss=(\tilde{\sss}_{n,k}-\sss)+\check{\sss}_{n,k}.\eeq
\end{lemma}

\section{Derivation of  upper bounds for  $\|\sss_{n,k}-\sss\|$}\label{se4}
\setcounter{equation}{0}
\setcounter{theorem}{0}
\subsection{Preliminaries}
We now turn to the study of $\sss_{n,k}-\sss$. Multiplying both sides of \eqref{eq87} on the left by $\GG$ and taking $l_2$ norms, and also invoking $\|\zz\|_2\leq \|\GG^{-1}\|_2\,\|\zz\|$, we obtain

\begin{align}
&\|\sss_{n,k}-\sss\|\leq \|\tilde{\sss}_{n,k}-\sss\|+\|\check{\sss}_{n,k}\|, \notag\\
&\frac{\|\check{\sss}_{n,k}\|}{\|\GG^{-1}\|_2}\leq \|\GG\tilde{\UU}\|_2\,\|\tilde{\WW}{}^+\|_2\,\|\check{\uu}_n\|
+\|\GG\UU\|_2\,\|\HH\|_2\,\|\uu_n\|+\|\GG\check{\UU}\|_2\,\|\tilde{\WW}{}^+\|_2\,\|\uu_n\|. \label{eq87p}
\end{align}
Thus, we  need to study the behavior of each one of the terms in this bound.
We begin with the following lemma.
\begin{lemma}\label{le2} The following are true:
\begin{align}
 \|{\GG\UU}\|_2&\leq K_1\|\eepsilon_n\|,& \hspace{-1cm}
 \|\GG\tilde{\UU}\|_2 &\leq K_2\|\eepsilon_n\|,& \hspace{-1cm}
 \|\GG\check{\UU}\|_2&\leq K_3 \|\eepsilon_n\|^2, \label{eq76a} \\
 \|{\WW}\|_2&\leq K_1' \|\eepsilon_n\|,& \hspace{-1cm}
 \|\tilde{\WW}\|_2 &\leq K_2'\|\eepsilon_n\|,& \hspace{-1cm}
 \|\check{\WW}\|_2&\leq K_3' \|\eepsilon_n\|^2,\label{eq76b}
\end{align}
with $K_i$, $K_i'$, $i=1,2,3,$  positive constants independent of $k$ and $n$.
\end{lemma}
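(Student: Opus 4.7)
The plan is to bound the spectral norm of each of the six matrices by its Frobenius norm and then estimate the columns one by one. For any matrix $\AAA=[\aaa_0\,|\,\cdots\,|\,\aaa_{k-1}]$, the inequality $\|\AAA\|\leq \|\AAA\|_F=\sqrt{\sum_{i=0}^{k-1}\|\aaa_i\|^2}$ will reduce every estimate to bounds on individual columns, which are already available from Lemma~\ref{le1} and formula \eqref{eq41a}. The key point that must be monitored throughout is that all resulting sums are bounded independently of $k$; this relies entirely on the contractive hypothesis $\|\tilde{\FF}\|\leq L<1$, which turns column bounds of the form $c\,L^i\|\eepsilon_n\|^q$ into convergent geometric series.

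First I would handle the tilde matrices $\tilde\UU$ and $\tilde\WW$. From \eqref{eq41a} we have $\tilde\uu_{n+i}=(\tilde\FF-\II)\tilde\FF{}^i\eepsilon_n$ and $\tilde\ww_{n+i}=(\tilde\FF-\II)^2\tilde\FF{}^i\eepsilon_n$, and since $\|\tilde\FF\|\leq L$ gives $\|\tilde\FF-\II\|\leq 1+L\leq 2$, each column is bounded by $2L^i\|\eepsilon_n\|$ and $4L^i\|\eepsilon_n\|$, respectively. Summing $\sum_{i=0}^{k-1}L^{2i}\leq (1-L^2)^{-1}$ yields
\[
\|\tilde\UU\|\leq \frac{2}{\sqrt{1-L^2}}\,\|\eepsilon_n\|,\qquad \|\tilde\WW\|\leq \frac{4}{\sqrt{1-L^2}}\,\|\eepsilon_n\|,
\]
giving the constants $K_2$ and $K_2'$, manifestly independent of $k$ and $\eepsilon_n$.

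Next I would treat the check matrices $\check\UU$ and $\check\WW$, using the bounds $\|\check\uu_{n+i}\|\leq D_i\|\eepsilon_n\|^2$ and $\|\check\ww_{n+i}\|\leq E_i\|\eepsilon_n\|^2$ from Lemma~\ref{le1}. A small bookkeeping step shows $C_i\leq \tfrac{a}{1-L}L^{i-1}$ for $i\geq 1$ (with $C_0=0$), hence $D_i$ and $E_i$ also decay geometrically in $L^i$, so $\sum_{i=0}^{\infty}D_i^2$ and $\sum_{i=0}^{\infty}E_i^2$ are finite. This yields $\|\check\UU\|\leq K_3\|\eepsilon_n\|^2$ and $\|\check\WW\|\leq K_3'\|\eepsilon_n\|^2$ with $K_3,K_3'$ depending only on $a$ and $L$.

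Finally, for $\UU$ and $\WW$ I would simply apply the triangle inequality to the decompositions \eqref{eq42} and \eqref{eq43}. The higher-order check terms are absorbed into linear-in-$\|\eepsilon_n\|$ bounds using $\|\eepsilon_n\|\leq\delta$:
\[
\|\UU\|\leq \|\tilde\UU\|+\|\check\UU\|\leq (K_2+K_3\delta)\|\eepsilon_n\|,
\]
and similarly for $\|\WW\|$, giving $K_1$ and $K_1'$. (Alternatively one could estimate $\|\uu_{n+i}\|\leq \|\eepsilon_{n+i+1}\|+\|\eepsilon_{n+i}\|\leq 2L^i\|\eepsilon_n\|$ directly and sum, bypassing the tilde/check splitting entirely.) There is no real obstacle here; the only item deserving care is verifying that the geometric series really are $k$-uniform, which is where the hypothesis $L<1$ is indispensable. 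Every constant produced depends only on $a$, $L$, and $\delta$.
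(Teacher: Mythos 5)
Your proposal is correct and follows essentially the same route as the paper: bound the spectral norm by the Frobenius norm and sum the column bounds, which decay geometrically because $L<1$, giving $k$-independent constants. The paper only writes out the $\|\UU\|$ case explicitly (via $\|\uu_m\|\leq(1+L)\|\eepsilon_m\|$ and $\|\eepsilon_{n+j}\|\leq L^j\|\eepsilon_n\|$, exactly your parenthetical alternative) and leaves the remaining five to the reader; your treatment of the tilde and check matrices via \eqref{eq41a} and Lemma \ref{le1} supplies those omitted details in the intended way.
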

\begin{proof}
To achieve the proof, we make use of \eqref{eq38}--\eqref{eq41a} and
\beq \label{eq651} \begin{split}\|\uu_m\|\leq (1+L) \|\eepsilon_m\|  \quad \text{and}\quad\|\ww_m\|\leq (1+L)^2 \|\eepsilon_m\|, \\
\|\tilde{\uu}_m\|\leq (1+L) \|\eepsilon_m\|  \quad \text{and}\quad\|\tilde{\ww}_m\|\leq (1+L)^2 \|\eepsilon_m\|. \end{split}\eeq
We prove the validity of the bound on $\|\GG\UU\|_2$ only;  the others can be proved in exactly the same way.

We start by  analyzing $\|\GG\UU\|_F$, the Frobenius norm of $\GG\UU$.
Noting that
$$ \GG\UU=[\,\GG\uu_n\,|\,\GG\uu_{n+1}\,|\,\cdots\,|\,\GG\uu_{n+k-1}\,], $$
we have
\begin{align*} \|\GG\UU\|_F^2=\sum^{k-1}_{j=0}\|\GG\uu_{n+j}\|_2^2=\sum^{k-1}_{j=0}\|\uu_{n+j}\|^2 & \leq \sum^{k-1}_{j=0}[(1+L)\|\eepsilon_{n+j}\|]^2  \quad \text{by \eqref{eq651}} \\
& \leq (1+L)^2\sum^{k-1}_{j=0}(L^j\|\eepsilon_n\|)^2\quad \text{by \eqref{eq33}} \\
& =\frac{1+L}{1-L}(1-L^{2k})\|\eepsilon_n\|^2 \\
&< \frac{1+L}{1-L}\|\eepsilon_n\|^2.\end{align*}
The result $\|\GG\UU\|_2 \leq K_1\|\eepsilon_n\|$, with $K_1=\sqrt{(1+L)/(1-L)}$,  now follows by invoking $\|\GG\UU\|_2\leq\|\GG\UU\|_F$.\footnote{Recall that, for any matrix $\KK$ with $\text{rank}(\KK)=r$,
we have $\|\KK\|_2\leq \|\KK\|_F\leq r\|\KK\|_2$. See Golub and Van Loan \cite{Golub:2013:MC}.}
\end{proof}

 \subsection{Upper bounds for $\|\tilde{\WW}{}^+\|_2$ and $\|\HH\|_2$}
Next,  by Theorem \ref{thA2} in the appendix, we can bound $\|\HH\|_2$ as in
\beq \label{eq48g}\|\HH\|_2\leq \sqrt{2}\frac{\Delta}{1-\Delta}\|\tilde{\WW}{}^+\|_2\quad\text{provided \
$\Delta=\|\tilde{\WW}{}^+\|_2\,\|\check{\WW}\|_2<1$}.\eeq
We realize that all we need is a suitable  upper bound on $\|\tilde{\WW}{}^+\|_2$
since we already have an upper bound on $\|\check{\WW}\|_2$ from \eqref{eq663}.
We turn to this issue next.

\sloppypar
Now, by \eqref{eq13f} and \eqref{eq41a}, we have
\beq   \tilde{\WW}=[\,(\tilde{\FF}-\II)^2\eepsilon_n\,|\,(\tilde{\FF}-\II)^2\tilde{\FF}\eepsilon_n\,|
\,\cdots\,|\,(\tilde{\FF}-\II)^2\tilde{\FF}{}^{k-1}\eepsilon_n\,],\eeq
which can be written in the form
\beq \tilde{\WW}=\|\eepsilon_n\|_2\,\overset{\circ}{\WW},\quad \overset{\circ}{\WW}=\RR\SSS(\ee_n),\eeq where
\beq \RR=(\tilde{\FF}-\II)^2=\GG^2,\ \ \SSS(\yy)= [\,\yy\,|\,\tilde{\FF}\yy\,|
\,\cdots\,|\,\tilde{\FF}{}^{k-1}\yy\,],\quad \ee_n=\frac{\eepsilon_n}{\|\eepsilon_n\|_2}. \eeq
[Note that the columns of $\SSS(\yy)$ span the Krylov subspace
${\cal K}_k(\tilde{\FF};\yy)=\text{span}\{\yy,\tilde{\FF}\yy,\ldots,\tilde{\FF}{}^{k-1}\yy\}$.]
First, $\RR$ is $N\times N$, constant, and nonsingular since $\GG$ is.
Next, we recall that $k$ is at most the degree of the minimal polynomial of $\tilde{\FF}$ with respect to the vector $\eepsilon_n$, which implies that the vectors
$\tilde{\FF}{}^{i}\eepsilon_n$, $i=0,1,\ldots,k-1,$ are linearly independent and, therefore,
$\text{rank} (\SSS(\ee_n))=k$. As a result,
$\text{rank}(\tilde{\WW})=k=\text{rank}(\overset{\circ}{\WW})$ since $\RR$ is nonsingular.
 By  the fact that $(a \KK)^+=a^{-1}\KK^+$ for every nonzero scalar $a\in\C$, and by Theorem \ref{thAA} in the appendix, we thus have
\beq \label{eqrr1} \tilde{\WW}{}^+=\frac{1}{\|\eepsilon_n\|_2}\overset{\circ}{\WW}{}^+\quad\Rightarrow\quad
\|\tilde{\WW}{}^+\|_2=\frac{1}{\|\eepsilon_n\|_2}\|\overset{\circ}{\WW}{}^+\|_2 \eeq
and
\beq \label{eqrr2} \|\overset{\circ}{\WW}{}^+\|_2\leq \|\RR^{-1}\|_2\,\|\SSS(\ee_n)^+\|_2.\eeq
We need to bound only $\|\SSS(\ee_n)^+\|_2$ {\em uniformly} (i)\,for all $n=1,2, \ldots$ in the $n$-Mode, and (ii)\,for all unit vectors $\ee^{(r)}_n=\eepsilon^{(r)}_n/\|\eepsilon^{(r)}_n\|_2$ arising in the different cycles of the C-Mode and the MC-Mode. Unfortunately, we are not able to prove the existence of such uniform bounds.
In what follows, concerning the application  of RRE in the $n$-Mode and in two cycling modes,  we {\em assume} that, at each step of the different modes of usage of RRE,
$\|\SSS(\ee_n)^+\|_2$ is bounded uniformly throughout, that is, we assume that,
for some constant $\tilde{\eta}>0$,
\beq \label{eq662y}\|\SSS(\ee_n)^+\|_2\leq \tilde{\eta}. \eeq
Combining \eqref{eqrr1}--\eqref{eq662y} and invoking also $\|\eepsilon_n\|\leq \|\GG\|_2\,\|\eepsilon_n\|_2$, we obtain
\beq \label{eq666}\boxed{\|\tilde{\WW}{}^+\|_2\leq\frac{\eta}{\|\eepsilon_n\|},\quad \eta=\tilde{\eta}\|\GG\|_2\,\|\RR^{-1}\|_2}.\eeq
We shall  comment on this assumption concerning the uniform upper bound for $\|\SSS(\ee_n)^+\|_2$
in Section \ref{se6}.

The first thing to do now is to guarantee that $\Delta=\|\tilde{\WW}{}^+\|_2\,\|\check{\WW}\|_2<1$ in  \eqref{eq48g} is satisfied under the assumption in \eqref{eq666} concerning $\|\tilde{\WW}{}^+\|_2$. By \eqref{eq666} and \eqref{eq76b} and the fact that $\|\eepsilon_0\|\leq \delta$ since $\xx_0\in B(\sss,\delta)$, we have
\beq\label{eq663} \Delta\leq K_3'\eta\|\eepsilon_n\|\leq K_3'\eta L^n\|\eepsilon_0\|\leq  K_3'\eta L^n\delta.\eeq
Clearly, by making $\delta$ sufficiently small, we can make the upper bound on $\Delta$ smaller than one. The closer   $\delta$  is to zero, the closer $\xx_0$ is to $\sss$. This is precisely what is needed in order to develop a {\em local} convergence theory  for any extrapolation method.

Next, by \eqref{eq48g},  \eqref{eq666}, and \eqref{eq663},
\beq \label{eq663h}\|\HH\|_2\leq \lambda_n, \quad
\lambda_n=\sqrt{2}\frac{K_3'\eta^2}{1-K_3'\eta \|\eepsilon_n\|}.\eeq
As we will show later, $\eepsilon_n$ is bounded in all three modes ($n$-Mode, C-Mode, and MC-Mode) we study here, which   implies that $\lambda_n$ is bounded too.
\medskip

\noindent{\bf Remark:} Before proceeding further, we would like to discuss an interesting consequence of
the global assumption we have made concerning $\tilde{\WW}{}^+$.  By \eqref{eq666} and \eqref{eq663h}
and also by \eqref{eq48f}, namely, that $\WW{}^+=\tilde{\WW}{}^++\HH$, we have
$$ \|\WW{}^+\|_2\leq \|\tilde{\WW}{}^+\|_2+\|\HH\|_2\leq \frac{\eta}{\|\eepsilon_n\|}+\lambda_n.$$
As a result, the vector $\xxi=-\WW{}^+\uu_n$ defined via \eqref{eq20}, satisfies
$$ \|\xxi\|_2\leq \|\WW{}^+\|_2\, \|\uu_n\|_2\leq (1+L)(\eta+\lambda_n\|\GG^{-1}\|_2\,\|\eepsilon_n\|).$$
Here we have made use of \eqref{eq651} too.
Since $\|\eepsilon_n\|$ and $\lambda_n$ are bounded,  so is $\lambda_n\|\eepsilon_n\|$, in all three modes. This implies that $\xxi$ is bounded, which causes $\ggamma$ in \eqref{eq14}--\eqref{eq16} to be  bounded as well. This can be seen by expressing the $\gamma_i$ in terms of the $\xi_i$ by employing \eqref{eq19} as in
$$ \gamma_0=1-\xi_0;\quad \gamma_i=\xi_{i-1}-\xi_i,\quad i=1,\ldots,k-1;\quad \gamma_k=\xi_{k-1}.$$
Thus, we have globally
$$\boxed{\sum^k_{i=0}|\gamma_i|\leq\Gamma \quad \text{for some $\Gamma>0$ throughout all three modes.}}$$
Interestingly, this is analogous to the global assumption made by Toth and Kelly \cite{Toth:2015:CAA} in the convergence analysis of the  acceleration method of Anderson
\cite{Anderson:1965:IPN}. Note that, when applied to linear systems,
Anderson acceleration is equivalent to GMRES (see
Walker and Ni \cite{Walker:2011:AAF}), which is equivalent to RRE applied to linear systems
(see Sidi \cite{Sidi:1988:EVP}).

\subsection{Upper bound for $\|\sss_{n,k}-\sss\|$}
With the different matrices in \eqref{eq87p} bounded as above, we turn to $\sss_{n,k}-\sss$.
By \eqref{eq76a}, \eqref{eq76b}, \eqref{eq651}, and \eqref{eq666}, we have
\beq \|\GG\tilde{\UU}\|_2\,\|\tilde{\WW}{}^+\|_2\,\|\check{\uu}_n\|\leq
K_2\eta D_0 \|\eepsilon_n\|^2,\eeq
\beq \|\GG\UU\|_2\,\|\HH\|_2\,\|\uu_n\|\leq K_1\lambda_n(1+L)\,\|\eepsilon_n\|^2,\eeq
\beq \|\GG\check{\UU}\|_2\,\|\tilde{\WW}{}^+\|_2\,\|\uu_n\|\leq
K_3\eta(1+L)\,\|\eepsilon_n\|^2.\\
\eeq
Substituting  these into \eqref{eq87p}, we obtain
\beq \label{eq87u} \|\check{\sss}_{n,k}\|\leq \tau_n\,\|\eepsilon_n\|^2,\quad \tau_n=
[K_2\eta D_0+(K_1\lambda_n+K_3\eta)(1+L)]\,\|\GG^{-1}\|_2,\eeq
and this leads to the  bound on $\|\sss_{n,k}-\sss\|$ in the next lemma:
\begin{lemma}\label{le91}
The norm of the error vector $\sss_{n,k}-\sss$ can be bounded as in
\beq\label{eq87q}
\|\sss_{n,k}-\sss\|\leq \|\tilde{\sss}_{n,k}-\sss\|+\tau_n\,\|\eepsilon_n\|^2,\quad \tau_n=
[K_2\eta D_0+(K_1\lambda_n+K_3\eta)(1+L)]\,\|\GG^{-1}\|_2.\eeq
\end{lemma}

\noindent{\bf Remark:} Note that, by \eqref{eq663h} and \eqref{eq87q},  $\lim_{n\to\infty}\tau_n$ is finite since $\lim_{n\to\infty}\lambda_n$ is finite.
Therefore, $\|\sss_{n,k}-\sss\|$ cannot be smaller than $\|\check{\sss}_{n,k}\|\leq \tau_n\|\eepsilon_n\|^2$, even though $\|\tilde{\sss}_{n,k}-\sss\|$ may be smaller.
In other words, the term $\|\check{\sss}_{n,k}\|$ limits the accuracy of $\sss_{n,k}$
as an approximation to $\sss$.
\section{Convergence analysis}\label{se5}
\setcounter{equation}{0}
\setcounter{theorem}{0}
\subsection{Preliminaries}
We start by studying the term $\|\tilde{\sss}_{n,k}-\sss\|$. We recall that $\tilde{\sss}_{n,k}$ is the vector obtained by applying RRE to the
vectors $\tilde{\xx}_m$, $m=n,n+1,\ldots,n+k,$  with $\tilde{\xx}_n=\xx_n$, as described  in subsection \ref{sse31}.  Our study will be based on the developments of  \cite{Sidi:1988:EVP},   \cite{Sidi:1991:UBC}, and \cite[Chapters 6,7]{Sidi:2017:VEM}.

We first have
\begin{align} \label{eq987} \|\tilde{\sss}_{n,k}-\sss\|=\|\GG(\tilde{\sss}_{n,k}-\sss)\|_2&=\min_{g\in\tilde{\cal P}_k}
\|g(\tilde{\FF})\GG(\tilde{\xx}_n-\sss)\|_2 \quad \text{by  Theorem \ref{thopt}} \notag\\
&= \min_{g\in\tilde{\cal P}_k}
\|g(\tilde{\FF})\GG({\xx}_n-\sss)\|_2\quad \text{because $\tilde{\xx}_n=\xx_n$} \notag \\
&\leq \bigg[\min_{g\in\tilde{\cal P}_k}
\|g(\tilde{\FF})\|_2\bigg] \|\GG(\xx_n-\sss)\|_2\notag \\
& = \theta_k\|\eepsilon_n\|, \end{align}
 recalling that $\xx_n-\sss=\eepsilon_n$ and  defining
\beq \theta_k= \min_{g\in\tilde{\cal P}_k}\|g(\tilde{\FF})\|_2.\eeq
(Note that $\theta_k$ depends only on $\tilde{\FF}$ and $k$.)
Of course, we also have
\beq \label{eq987a}\theta_k\leq \|g(\tilde{\FF})\|_2\,\quad \forall\,  g\in\tilde{\cal P}_k.  \eeq
We now would like to bound $\theta_k$ appropriately.
Choosing  $g(z)=z^k$ in  \eqref{eq987a}, we obtain,
\beq\label{eqtk}\theta_k=\min_{g\in\tilde{\cal P}_k}\|g(\tilde{\FF})\|_2\leq \|\tilde{\FF}{}^k\|_2
\leq \|\tilde{\FF}\|_2^k\leq L^k,
\quad\text{at worst.}\footnote{Clearly, $g(z)=z^k$ is in $\tilde{\cal P}_k$ and $\theta_k<1$ since $L<1$. Next, in general, the polynomial $g(z)$ that gives the optimum in \eqref{eqtk} is different from $z^k$. Thus, generally speaking, $\theta_k<L^k$.}\eeq
With all these developments, \eqref{eq987} and  \eqref{eq87u} together give
 the result in the next lemma:
 \begin{lemma}\label{le78} The  error vector $\sss_{n,k}-\sss$ satisfies
\beq \label{eq135}\|{\sss}_{n,k}-\sss\|\leq  \theta_k\|\eepsilon_n\|+\tau_n
\|\eepsilon_n\|^2.\eeq
\end{lemma}

\noindent{\bf Remark:} By choosing $g(z)\in\tilde{\cal P}_k$ suitably,
upper bounds on $\theta_k$ that are
 smaller than $L^k$ can be given for different cases.
We give such bounds for two such cases here. For additional cases involving orthogonal polynomials, such as Jacobi polynomials, we refer the reader to Sidi and Shapira \cite{Sidi:1991:UBC}.
\begin{itemize}
\item
If  the hermitian part of $\GG=\II-\tilde{\FF}$, namely, the matrix $\GG_H=\frac{1}{2}(\GG+\GG^*)$, is positive definite, then
$$\theta_k\leq (1-\nu^2/\sigma^2)^{k/2},$$
where $\sigma$ is the largest singular value of $\GG$ and $\nu$ is the smallest eigenvalue of $\GG_H$.
Of course, $0<\nu<\sigma$. See \cite{Sidi:1988:EVP}.
\item
If $\tilde{\FF}$ is hermitian with eigenvalues in the (real) interval $[\alpha, \beta]$, $-1<\alpha<\beta<1$, then
$$ \theta_k\leq \frac{1}{T_k\big(\frac{2-\alpha-\beta}{\beta-\alpha}\big)}<2\bigg(\frac{\sqrt{\kappa}-1}
{\sqrt{\kappa}+1}\bigg)^k, \quad \kappa=\frac{1-\alpha}{1-\beta}>1.$$
Here $T_k(z)$ is the Chebyshev polynomial of the first kind of degree $k$.
(See Varga \cite[Chapter 5]{Varga:2000:MIA}, for example.)
Note that, in this case, $\theta_k< L^k$, with $L=\max(|\alpha|,|\beta|)<1.$
 \end{itemize}

\subsubsection*{Main assumptions}
Before delving into the local convergence analyses of the different modes of usage of RRE, we would like to summarize the assumptions we have made so far. We will be referring to them in the statements of our (local) convergence theorems below.
\begin{enumerate}
\item [A1.]
 $\ff\in C^2(B(\sss,\delta))$ for some $\delta>0$. (We can assume  $\delta$ to be as small as needed in our proofs.)
 \item [A2.]
$\|\tilde{\FF}\|\leq \max_{\xx\in B(\sss,\delta)}\|\FF(\xx)\|=L<1$, which also implies that $\rho(\tilde{\FF})\leq L.$
\item [A3.]
The very first vector $\xx_0$, with which we start any of the modes, is in $B(\sss,\delta) $. Thus,
 $\|\xx_0-\sss\|<\delta$.
\item [A4.]
$\|\tilde{\WW}{}^+\|\leq{\eta}/{\|\epsilon_n\|}$ \ \  for every $n$ in the $n$-Mode and for every cycle in the  C-Mode and the MC-Mode. ($\eta>0$ is fixed.)
\end{enumerate}

\subsection{Convergence in  $n$-Mode}\label{sse51}

We recall that, in the $n$-Mode, we are applying RRE, with  $k\ge1$ fixed throughout, to the {\em infinite} sequence $\{\xx_m\}$ that is generated as in \eqref{eq2}. (That is, no cycling is involved.)
\begin{theorem}
Under the assumptions A1--A4, RRE converges in the $n$-Mode. Actually, we have
\beq \label{eq654q} \limsup_{n\to\infty}\frac{\|{\sss}_{n,k}-\sss\|}{\|\eepsilon_n\|}\leq \theta_k<1.\eeq
\end{theorem}
\begin{proof}
 Since $\lim_{n\to\infty}\|\eepsilon_n\|=0$ and   $\lim_{n\to\infty}\tau_n<\infty$,    it is clear from Lemma \ref{le78} that $\lim_{n\to\infty}\|\sss_{n,k}-\sss\|=0,$ hence   $\lim_{n\to\infty}\sss_{n,k}=\sss$.

Next, again by Lemma \ref{le78}, we have

$$\frac{\|{\sss}_{n,k}-\sss\|}{\|\eepsilon_n\|}\leq \theta_k+
\tau_n\|\eepsilon_n\|.$$ Taking the limsup as $n\to\infty$ of both sides and recalling again that   $\lim_{n\to\infty}\tau_n<\infty$ and $\lim_{n\to\infty}\|\eepsilon_n\|=0$, the result in \eqref{eq654q}  follows.
\end{proof}

\noindent{\bf Remark:}
Let us also rewrite \eqref{eq87q} as
\beq \|\sss_{n,k}-\sss\|=O(\psi_n)\quad\text{as $n\to\infty$};\quad\psi_n=\max\{\|\tilde{\sss}_{n,k}-\sss\|, \|\eepsilon_n\|^2\}.\eeq
This is possible since $\lim_{n\to\infty}\tau_n$ is finite.
It is thus clear that $\|\sss_{n,k}-\sss\|$ cannot be less than $O(\|\eepsilon_n\|^2)=O(L^{2n})\approx O(\rho(\tilde{\FF})^{2n})$ as $n\to\infty$,
no matter what $\|\tilde{\sss}_{n,k}-\sss\|$ is. [See the remark following \eqref{eq87q}.]

\subsection{Convergence  in C-Mode cycling}\label{sse52}

 In C-Mode cycling, we keep $n\ge0$ and $k\ge1$ fixed throughout, $k$ always being assumed to be less than the degree of the minimal polynomial of $\tilde{\FF}$ with respect to the vector $\eepsilon_n$ in every cycle.
 \begin{theorem} \label{th53}
Under the assumptions A1--A4, RRE converges linearly in the C-Mode. Actually, we have
\beq \label{eq654} \limsup_{r\to\infty}\frac{\|{\sss}^{(r+1)}_{n,k}-\sss\|}
{\|{\sss}^{(r)}_{n,k}-\sss\|}\leq \theta_k L^n<1.\eeq
\end{theorem}
\begin{proof}
We start by observing that, by Lemma \ref{le32}, there holds $\|\eepsilon_n\|\leq L^n\|\eepsilon_0\|.$ With this, \eqref{eq135} becomes
\beq \label{eq012} \|{\sss}_{n,k}-\sss\|\leq \big(\theta_kL^n+\tau_n L^{2n}\|\eepsilon_0\|\big) \|\eepsilon_0\|.\eeq

Let us now denote the vectors $\xx_m$, $\eepsilon_m=\xx_m-\sss$, and $\sss_{n,k}$ used/computed in  cycle $i$ by $\xx_m^{(i)}$, $\eepsilon_m^{(i)}$, and $\sss_{n,k}^{(i)}$,
respectively,   and  rewrite \eqref{eq012} that is relevant to the  cycle $(r+1)$ as
\beq \label{eq012r}
\|\sss_{n,k}^{(r+1)}-\sss\|\leq \big(\theta_kL^n+\tau_nL^{2n}\|\eepsilon_0^{(r+1)}\|\big)\,\|\eepsilon_0^{(r+1)}\|.\eeq
Let us also  recall that, in the C-Mode,   $\xx^{(r+1)}_0=\sss^{(r)}_{n,k}$, and hence
 $\eepsilon_0^{(r+1)}=\sss^{(r)}_{n,k}-\sss$. With these,  \eqref{eq012r} becomes
\beq \label{eq012s}
\|\sss_{n,k}^{(r+1)}-\sss\|\leq \mu_r
\|\sss^{(r)}_{n,k}-\sss\|, \quad  \mu_r=\theta_{k}L^n+\tau_nL^{2n}\|\sss^{(r)}_{n,k}-\sss\|.
  \eeq
We now show by  induction that for each  $r$, $\sss_{n,k}^{(r)}$ is in the ball $B(\sss,\delta)$ and tends to $\sss$ as $r\to\infty$, provided $\xx_0$ in step C0 of C-Mode cycling is sufficiently close to $\sss$.

For $r=0$, we have $\xx_0=\sss^{(0)}_{n,k}\in B(\sss,\delta)$ by choice;  therefore,
$$ \mu_0\leq \theta_{k}L^n+\tau_nL^{2n}\delta.$$ Since $\theta_{k}L^n<1$, we can force $\mu_0<1$ by choosing $\delta$ sufficiently small or by choosing $\sss^{(0)}_{n,k}$ sufficiently close to $\sss$.  This, in turn,  forces
$$\|\sss^{(1)}_{n,k}-\sss\|\leq\mu_0\|\sss^{(0)}_{n,k}-\sss\|\leq\mu_0\delta<\delta\quad
\Rightarrow\quad \sss^{(1)}_{n,k}\in B(\sss,\delta).$$
 Continuing by induction on $r$, we
see that $\mu_{r+1}<\mu_{r}$,  $\|\sss^{(r+1)}_{n,k}-\sss\|<\|\sss^{(r)}_{n,k}-\sss\|$, hence
$\sss^{(r+1)}_{n,k}\in B(\sss,\delta)$ since $\sss^{(r)}_{n,k}\in B(\sss,\delta)$.
We also have $\lim_{r\to\infty}\|\sss_{n,k}^{(r)}-\sss\|=0,$  hence  $\lim_{r\to\infty}\sss_{n,k}^{(r)}=\sss$.
With the convergence of $\{\sss_{n,k}^{(r)}\}^\infty_{r=0}$ to $\sss$ established, let us now rewrite \eqref{eq012s} as
\beq\frac{\|\sss_{n,k}^{(r+1)}-\sss\|}
{\|\sss_{n,k}^{(r)}-\sss\|}\leq\mu_r.\eeq
Taking the limsup as $r\to\infty$ on both sides of this inequality, we obtain \eqref{eq654}.
\end{proof}

\subsection{Convergence in MC-Mode cycling}
We recall that in MC-Mode cycling, we keep $n$ fixed while $k=k_r$ is the  degree of the minimal polynomial of $\tilde{\FF}$ with respect to $\eepsilon_n$ in the $r$th cycle.

 \begin{theorem}
Under the assumptions A1--A4, RRE converges quadratically in the MC-Mode. Actually, we have
\beq \label{eq654d}\limsup_{r\to\infty}\frac{\|\sss_{n,k_{r+1}}^{(r+1)}-\sss\|}
{\|\sss_{n,k_r}^{(r)}-\sss\|^2}\leq\tau_nL^{2n}.\eeq
\end{theorem}

\begin{proof}
We start by noting that $\tilde{\sss}_{n,k_r}=\sss$ in each  cycle, as mentioned  at the end of subsection \ref{sse31}.
  Thus, \eqref{eq87q} becomes
\beq \label{eq375} \|{\sss}_{n,k}-\sss\|\leq \tau_n\|\eepsilon_n\|^2.\eeq
Proceeding precisely as in the proof of Theorem \ref{th53} concerning the C-Mode cycling,   we next obtain

\beq \label{eq375z} \|{\sss}_{n,k}-\sss\|\leq \tau_nL^{2n}\|\eepsilon_0\|^2.\eeq
As in the case of the C-Mode,  noting that  $\eepsilon_0^{(r+1)}=\sss^{(r)}_{n,k_r}-\sss$, we write \eqref{eq375z} in the MC-Mode as
\beq \label{eq890}\|{\sss}_{n,k_{r+1}}^{(r+1)}-\sss\|\leq \tau_nL^{2n}
\|{\sss}_{n,k_r}^{(r)}-\sss\|^2=\phi_r\,\|{\sss}_{n,k_r}^{(r)}-\sss\|,
\quad \phi_r=
\tau_nL^{2n}\|{\sss}_{n,k_r}^{(r)}-\sss\|.\eeq
We now show, by  induction on $r$, that $\sss_{n,k_r}^{(r)}$ is in the ball $B(\sss,\delta)$ and tends to $\sss$ as $r\to\infty$, provided $\xx_0$ in step MC0 of MC-Mode is sufficiently close to $\sss$.

For $r=0$, we have  $\xx_0=\sss^{(0)}_{n,k_0}\in B(\sss,\delta)$ by choice; therefore,
$$ \phi_0\leq \tau_nL^{2n}\delta\quad \Rightarrow\quad\text{$\phi_0<1$ provided $\delta$ sufficiently small.}$$
This implies that $\|\sss_{n,k_1}^{(1)}-\sss\|<\|\sss_{n,k_0}^{(0)}-\sss\|;$
therefore, $\sss_{n,k_1}^{(1)}\in B(\sss,\delta)$.
In addition, we also have $\phi_1<\phi_0$. Continuing by induction on $r$, we
see that $\phi_{r}<\phi_{r-1}<1$ hence  $\|\sss_{n,k_{r+1}}^{(r+1)}-\sss\|<
\|\sss_{n,k_{r}}^{(r)}-\sss\|$, which implies that $\sss_{n,k_{r+1}}^{(r+1)}\in B(\sss,\delta)$ since $\sss_{n,k_{r}}^{(r)}\in B(\sss,\delta)$, and that
$\lim_{r\to\infty}\|\sss_{n,k_r}^{(r)}-\sss\|=0,$  meaning that $\lim_{r\to\infty}\sss_{n,k_r}^{(r)}=\sss$.
With the convergence of $\{\sss_{n,k}^{(r)}\}^\infty_{r=0}$ to $\sss$ established, let us now rewrite \eqref{eq890} as
\beq  \frac{\|\sss_{n,k_{r+1}}^{(r+1)}-\sss\|}{\|\sss_{n,k_r}^{(r)}-\sss\|^2}
\leq \tau_nL^{2n}.\eeq
Taking the limsup as $r\to\infty$ on both sides of this inequality, we obtain \eqref{eq654d}.
Thus, the convergence of the sequence $\{\sss^{(r)}_{n,k_r}\}^\infty_{r=0}$ is quadratic.
\end{proof}

\section{\bf Remarks on $\|\SSS(\ee_n){}^+\|_2$} \label{se6}
\setcounter{equation}{0}
\setcounter{theorem}{0}
 Let us observe that  $\SSS(\yy)$ can be written as the product of two matrices as
\beq \SSS(\yy)=\PP\QQ(\yy),\eeq
where $\PP\in \C^{N\times kN}$ and $\QQ(\yy)\in \C^{kN\times k}$ are given as
\beq \PP=[\,\II\,|\,\tilde{\FF}\,|
\,\cdots\,|\,\tilde{\FF}{}^{k-1}\,];\quad \QQ(\yy)=\begin{bmatrix}\yy& \00& \cdots& \00\\
\00& \yy& \cdots& \00\\ \vdots& & \ddots& \vdots\\ \00& \00& \cdots& \yy\end{bmatrix}, \quad \yy\in\C^k.\eeq
Clearly, $\PP$ is a constant matrix and has full row rank,  while $\QQ(\yy)$ has full column rank for all nonzero $\yy$,  that is,
\beq \text{rank}(\PP)=N,\quad \text{rank}(\QQ(\yy))=k\ \ \forall\, \yy\neq\00.\eeq

Before going on, we recall that if $\KK\in\C^{m\times k}$ with $\text{rank}(\KK)=k$, then it has $k$ nonzero singular values, which we order such that
$$\sigma_1(\KK)\geq\sigma_2(\KK)\geq\cdots\geq \sigma_k(\KK)>0,$$ and
$$\sigma_k(\KK)=\min_{\zz\in \C^k,\ \|\zz\|_2=1}\|\KK\zz\|_2\quad\text{and}\quad \|\KK^+\|_2=1/\sigma_k(\KK).$$

Now, $\PP$ has $N$ positive singular values,
and therefore
$$ \|\PP^+\|_2=1/\sigma_N(\PP).$$
Next, $\QQ(\yy)$ is unitary when $\|\yy\|=1$, in the sense that
\beq \QQ(\yy)^*\QQ(\yy)=\II_{k\times k}\quad\forall\, \yy\in\C^k, \ \|\yy\|_2=1,\eeq
hence so is  $\QQ(\ee_n)$ since $\|\ee_n\|_2=1$. As a result $\QQ(\yy){}^+=\QQ(\yy)^*$ and $\QQ(\yy)$ has $k$ singular values, all equal to one, for all $\yy$, $\|\yy\|_2=1$.
Consequently,
\beq \|\QQ(\yy){}^+\|_2=1\quad \forall\,\yy\in\C^k, \ \|\yy\|_2=1.\eeq
Despite these interesting facts---that $\|\PP^+\|_2$ is fixed  and that $\|\QQ(\ee_n^{(r)}){}^+\|_2=1$  throughout the cycling process---we are not able to prove that
 $\|\SSS(\ee^{(r)}_n){}^+\|_2=\|[\PP\QQ(\ee^{(r)}_n)]{}^+\|_2\leq \alpha$ for some {\em fixed} $\alpha>0$,
 for all $r=0,1,\ldots,$
 where $\ee^{(r)}_n=\eepsilon^{(r)}_n/\|\eepsilon^{(r)}_n\|_2$
 in the $r$th cycle.

 For example, \eqref{eqA3} in the appendix, which would be extremely useful if applicable, does not apply
 to $\SSS(\yy)$. If it did, then we would have  $\SSS(\yy){}^+=\QQ(\yy)^*\PP{}^+$ hence  $\|\SSS(\yy)^+\|_2\leq\|\PP^+\|_2$, very conveniently.

 We might think that  Theorem \ref{thA4} in the appendix would apply to the $n$-Mode and C-Mode (it does not necessarily apply to the MC-Mode since the $\text{rank}(\SSS(\ee_n^{(r)}))=k_r$ may vary with $r$), but this too is problematic. Theorem \ref{thA4} requires the following:
 \begin{itemize}
 \item
 In the $n$-Mode, the sequence $\{\ee_n\}^\infty_{n=0}$, where $\ee_n=\eepsilon_n/\|\eepsilon_n\|_2$,  must have a limit $\ee_\infty$ such that $\text{rank}(\SSS(\ee_\infty))=k$. It is obvious from \eqref{eq37}--\eqref{eq38}
 that it is very difficult to determine whether such a vector $\ee_\infty$ exists when $\ff(\xx)$ is nonlinear.\footnote{For the linear system $\xx=\tilde{\ff}(\xx)$, we have  $\eepsilon_{n+1}=\tilde{\FF}\eepsilon_n$, $n=0,1,\ldots,$ as power iterations. Thus, in some cases,
 $\ee_\infty=\lim_{n\to\infty}\ee_n$ exists and   is an eigenvector of $\tilde{\FF}$,
 hence causes $\text{rank}(\SSS(\ee_\infty))=1$ at most. Clearly, this is a problem when $\text{rank}(\SSS(\ee_n))=k>1$, for $n=0,1,\ldots.$}
 \item
 In the C-Mode, the sequence $\{\ee_n^{(r)}\}^\infty_{r=0}$, where $\ee_n^{(r)}=\eepsilon_n^{(r)}/\|\eepsilon_n^{(r)}\|_2$,  must have a limit $\ee_n^{(\infty)}$ such that $\text{rank}(\SSS(\ee_n^{(\infty)}))=k$. It is obvious again from \eqref{eq37}--\eqref{eq38}  that it is very difficult to ascertain whether such a limit exists when $\ff(\xx)$ is  nonlinear.
\end{itemize}

A different approach to the issue, for the C-Mode, would be as follows:
Since $\SSS(\ee_n^{(r)})$ has full column rank, $\|\SSS(\ee_n^{(r)}){}^+\|_2=1/\sigma_k(\SSS(\ee_n^{(r)}))>0$ for every $r=1,2,\ldots.$
Defining the vector $\zzeta(\yy)\in \C^k$, $\|\zzeta(\yy)\|_2=1$,  via
\beq \min_{\zz\in\C^k,\, \|\zz\|_2=1} \|\SSS(\yy)\zz\|_2=\|\SSS(\yy)\zzeta(\yy)\|_2,\eeq
we thus have
\beq\sigma_k(\SSS(\ee_n^{(r)}))=\min_{\zz\in\C^k,\, \|\zz\|_2=1} \|\SSS(\ee_n^{(r)})\zz\|_2=\|\SSS(\ee_n^{(r)})\zzeta(\ee_n^{(r)})\|_2>0\quad \forall\, r=1,2,\ldots,\eeq
from which, we obtain
\beq  \sigma_k(\SSS(\ee_n^{(r)}))\geq
\liminf_{r\to\infty}\|\SSS(\ee_n^{(r)})\zzeta(\ee_n^{(r)})\|_2=\alpha\geq0. \label{eq559}\eeq
Clearly, $\alpha$ is independent of $r$.
Now, if we can  show that $\alpha>0$, we will have shown that $\|\SSS(\ee_n)^+\|_2\leq1/\alpha$, hence that $\|\SSS(\ee_n)^+\|_2$ is bounded uniformly throughout the cycling process.
Unfortunately, this does not seem to be the case in general; the best we can say is that $\alpha\geq0$.

Thus, even though $\sigma_k(\SSS(\ee^{(r)}_n))>0$  for $r=0,1,\ldots,$ it seems we cannot guarantee the existence of a fixed positive constant $\tilde{\alpha}$ such that, when applying RRE in the cycling mode, $\sigma_k(\SSS(\ee^{(r)}_n))\geq \tilde{\alpha}$ uniformly in every cycle. Therefore, we can only
{\em assume} that such a constant exists for the C-Mode cycling process being studied, for which $k$ is fixed throughout, namely,
\beq \label{eq234}\boxed{\|\SSS(\ee^{(r)}_n)^+\|_2\leq 1/\tilde{\alpha}<\infty\quad\forall\,r, \ \text{rank}(\SSS(\ee^{(r)}_n))=k\leq {k}_r, \quad r=0,1,\ldots,}\eeq
where ${k}_r$ is the degree of the minimal polynomial of $\tilde{\FF}$ with respect to $\ee^{(r)}_n$.

 As for the MC-Mode cycling process, we can, similarly,  only  {\em assume} that
\beq \label{eq235}\boxed{\|\SSS(\ee^{(r)}_n)^+\|_2\leq 1/\tilde{\alpha}<\infty\quad\forall\,r, \ \text{rank}(\SSS(\ee^{(r)}_n))={k}_r, \quad r=0,1,\ldots.}\eeq
(This is reasonable because there are only finitely many $k_r$ as $1\leq k_r\leq N$.)
Precisely \eqref{eq234} and  \eqref{eq235} are  what we have assumed in \eqref{eq662y}.

Finally, we note that the  global condition in \eqref{eq666}  we have imposed on the three modes for RRE  discussed in this work is formulated in terms of $\tilde{\FF}$, the Jacobian matrix of $\ff(x)$ at the solution $\sss$ only, and it  concerns $\sss_{n,k}$ with  arbitrary $n$. This should be contrasted with the global condition   introduced in \cite{Jbilou:1991:SRA} for the MC-Mode only that is formulated in terms of  $\ff(\xx)$, and concerns $\sss_{0,k}$.  Denoting the $\xx_i$ and the $\uu_i=\xx_{i+1}-\xx_i$ generated at the $r$th cycle by  $\xx_i^{(r)}$ and  $\uu_i^{(r)}$, respectively, with $\xx_0^{(r)}=\sss^{(r-1)}_{0,k_{r-1}}$,
the condition of \cite{Jbilou:1991:SRA} reads as follows:

$$ \sqrt{\det(\YY_r^* \YY_r)}\geq \alpha>0 \ \ \forall\, r; \quad  \YY_r=[\hat{\uu}_0^{(r)}\,|\,\hat{\uu}_1^{(r)}\,|\,\cdots\,|\,
\hat{\uu}_{k_r-1}^{(r)}\,],\quad \hat{\uu}_i^{(r)}={\uu}_i^{(r)}/\|{\uu}_i^{(r)}\|_2.$$

\appendix
\section*{Appendix: Some properties of Moore--Penrose inverses}
\setcounter{section}{1} \setcounter{equation}{0} \setcounter{theorem}{0}
First , we recall the well-known facts
\beq \AAA\in\C^{m\times n},\quad \text{rank}(\AAA)=n\quad\Rightarrow\quad \AAA^+=(\AAA^*\AAA)^{-1}\AAA^* \quad\Rightarrow\quad\AAA^+\AAA=\II_{n\times n},\eeq
\beq \AAA\in\C^{m\times n},\quad \text{rank}(\AAA)=m\quad\Rightarrow\quad
\AAA^+=\AAA^*(\AAA\AAA^*)^{-1} \quad\Rightarrow\quad\AAA\AAA^+=\II_{m\times m},\eeq
and
\beq \label{eqA3}\AAA\in\C^{m\times n},\quad \BB\in\C^{n\times p},\quad\text{rank}(\AAA)=\text{rank}(\BB)=n\quad
\Rightarrow\quad (\AAA\BB)^+=\BB^+\AAA^+.\eeq

The following theorems on Moore--Penrose inverses
of perturbed matrices can be found in Ben-Israel and Greville \cite{Ben-Israel:1966:EBG}, Wedin \cite{Wedin:1973:PTP}, and Stewart \cite{Stewart:1969:CGI}. Here we give independent proofs of two of  them.

\noindent{\bf Remark:} For convenience of notation, throughout this appendix only, we will use $\|\cdot\|$ to denote  the $l_2$ norm. (Thus,   $\|\cdot\|$ here does {\em not} stand for the $\GG$ norm we have used in Sections \ref{se1}--\ref{se6}.)
\begin{theorem} \label{thAA}
Let $\AAA\in\C^{m\times n}$, $\text{\em rank}(\AAA)=n$,  and let $\GG\in\C^{m\times m}$ be nonsingular and define $\BB=\GG\AAA$. Then $\text{\em rank}(\BB)=n$ too,  and
$$ \|\BB^+\|\leq \|\GG^{-1}\|\|\AAA^+\|.$$
\end{theorem}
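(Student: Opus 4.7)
The plan is to reduce the problem to a comparison of smallest singular values by combining the variational characterization with the invertibility of $\GG$. Recall that for any matrix $\MM\in\C^{m\times n}$ of full column rank, the identity $\|\MM^+\|=1/\sigma_{\min}(\MM)$ holds, where $\sigma_{\min}(\MM)=\min_{\|\zz\|=1}\|\MM\zz\|$. This is the tool I would use to convert pseudoinverse norms into quantities over which $\GG$ and $\GG^{-1}$ act naturally.

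First I would verify that $\BB=\GG\AAA$ has full column rank $n$. Since $\GG$ is nonsingular, $\BB\zz=\00$ forces $\AAA\zz=\00$, which by $\text{rank}(\AAA)=n$ forces $\zz=\00$; hence $\text{rank}(\BB)=n$ and $\|\BB^+\|=1/\sigma_{\min}(\BB)$.

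Next, for any $\zz\in\C^n$ with $\|\zz\|=1$, I would write
\beq
\|\AAA\zz\|=\|\GG^{-1}\GG\AAA\zz\|=\|\GG^{-1}\BB\zz\|\leq\|\GG^{-1}\|\,\|\BB\zz\|.
\eeq
Taking the minimum over unit vectors $\zz$ gives the key inequality
\beq
\sigma_{\min}(\AAA)\leq \|\GG^{-1}\|\,\sigma_{\min}(\BB),
\eeq
which, upon inverting, yields
\beq
\|\BB^+\|=\frac{1}{\sigma_{\min}(\BB)}\leq \|\GG^{-1}\|\,\frac{1}{\sigma_{\min}(\AAA)}=\|\GG^{-1}\|\,\|\AAA^+\|.
\eeq

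There is no real obstacle here; the only conceptual step is recognizing that one cannot directly apply the product rule $(\GG\AAA)^+=\AAA^+\GG^+$ (since $\GG$ need not have rank equal to the inner dimension in the form required by \eqref{eqA3} applied in reverse), so one must instead go through singular values. Once that observation is made, the argument is a two-line computation built on the variational characterization of $\sigma_{\min}$ and the submultiplicativity of the induced norm.
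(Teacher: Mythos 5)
Your proof is correct and follows essentially the same route as the paper's: both reduce the claim to the comparison $\sigma_{\min}(\AAA)\leq\|\GG^{-1}\|\,\sigma_{\min}(\BB)$ via $\AAA=\GG^{-1}\BB$ and then invoke $\|\KK^+\|=1/\sigma_{\min}(\KK)$ for full-column-rank $\KK$. The only cosmetic difference is that the paper spells out the minimization step with explicit minimizers $\xx'$ and $\xx''$, whereas you state ``taking the minimum over unit vectors'' directly; both are valid.
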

\begin{proof} That $\text{rank}(\BB)=n$ is clear since $\GG$ is nonsingular. Starting now with $\AAA=\GG^{-1}\BB$, we first have
$$ \AAA\xx=\GG^{-1}(\BB\xx)\quad \Rightarrow \quad \|\AAA\xx\|
\leq\|\GG^{-1}\|\,\|\BB\xx\|\quad \forall\, \xx\in\C^n,\quad \|\xx\|=1.$$
Let $\xx'$ and $\xx''$, with $\|\xx'\|=1$ and $\|\xx''\|=1$, be such that $$\sigma_{\min}(\AAA)=\min_{\|\xx\|=1}\|\AAA\xx\|=\|\AAA\xx'\|\quad\text{and}\quad  \sigma_{\min}(\BB)=\min_{\|\xx\|=1}\|\BB\xx\|=\|\BB\xx''\|,$$
where $\sigma_{\min}(\KK)$ denotes the smallest singular value of a matrix $\KK$.
Then
$$\sigma_{\min}(\AAA)=\|\AAA\xx'\|\leq \|\AAA\xx''\|\leq \|\GG^{-1}\|\,\|\BB\xx''\|
=\|\GG^{-1}\|\,\sigma_{\min}(\BB). $$
The result follows by recalling that $\|\KK^+\|=1/\sigma_{\min}(\KK) $ when $\KK$ has full column rank, which implies that
$\sigma_{\min}(\KK)>0$.
\end{proof}

\begin{theorem}
\label{thA1} Let $\AAA\in\C^{m\times n}$ and $(\AAA+\EE)\in\C^{m\times n}$, $m\geq n$, such that
$\text{\em rank}(\AAA)=n$ and $\|\EE\AAA^+\|<1.$
Then
$$\|(\AAA+\EE)^+\|\leq\frac{\|\AAA^+\|}{1-\|\EE\AAA^+\|}.$$
If $\Delta=\|\EE\|\,\|\AAA^+\|<1$ in addition, then this result can be expressed as
$$\|(\AAA+\EE)^+\|\leq\frac{1}{1-\Delta}\|\AAA^+\|.$$
\end{theorem}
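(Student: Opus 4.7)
My plan is to bound $\|\BB^+\|$ (where $\BB := \AAA + \EE$) by bounding $\sigma_{\min}(\BB)$ from below, and then using the identity $\|\BB^+\| = 1/\sigma_{\min}(\BB)$ that holds whenever $\BB$ has full column rank.

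First, I would write, for any $\xx\in\C^n$ with $\|\xx\|=1$,
\[
\BB\xx = \AAA\xx + \EE\xx.
\]
The key algebraic move is to exploit the full-column-rank identity $\AAA^+\AAA=\II_n$, which lets me substitute $\xx = \AAA^+\AAA\xx$ inside the second summand:
\[
\BB\xx = \AAA\xx + \EE\AAA^+\AAA\xx = (\II_m + \EE\AAA^+)\AAA\xx.
\]
This is the step that causes the quantity $\|\EE\AAA^+\|$ (rather than the coarser $\|\AAA^+\|\,\|\EE\|$) to appear in the bound; I expect it to be the conceptual core of the argument.

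From this identity, the reverse triangle inequality yields
\[
\|\BB\xx\| \;\geq\; \|\AAA\xx\| - \|\EE\AAA^+\|\,\|\AAA\xx\| \;=\; (1-\|\EE\AAA^+\|)\,\|\AAA\xx\|,
\]
and since $\|\AAA\xx\|\geq \sigma_{\min}(\AAA)=1/\|\AAA^+\|$ for all unit $\xx$, the hypothesis $\|\EE\AAA^+\|<1$ gives
\[
\sigma_{\min}(\BB) \;\geq\; \frac{1-\|\EE\AAA^+\|}{\|\AAA^+\|} \;>\; 0.
\]
In particular $\BB$ has full column rank (so that writing $\|\BB^+\|=1/\sigma_{\min}(\BB)$ is legitimate), and inverting the inequality produces the first bound
\[
\|\BB^+\| \;\leq\; \frac{\|\AAA^+\|}{1-\|\EE\AAA^+\|}.
\]

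Finally, under the extra assumption $\Delta=\|\EE\|\,\|\AAA^+\|<1$, the submultiplicative inequality $\|\EE\AAA^+\|\leq\|\EE\|\,\|\AAA^+\|=\Delta$ implies $1-\|\EE\AAA^+\|\geq 1-\Delta$, so monotonicity of $x\mapsto 1/(1-x)$ on $[0,1)$ yields the second, looser bound
\[
\|\BB^+\| \;\leq\; \frac{\|\AAA^+\|}{1-\Delta}.
\]
There is no genuine obstacle here beyond recognizing the identity $\xx=\AAA^+\AAA\xx$; once that is used, everything else is a direct application of triangle inequality, submultiplicativity, and the characterization of $\|\BB^+\|$ via the smallest singular value.
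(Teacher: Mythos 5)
Your proof is correct, and it hinges on exactly the same key identity as the paper's: writing $\AAA+\EE=(\II+\EE\AAA^+)\AAA$ via $\AAA^+\AAA=\II_n$, which is what makes $\|\EE\AAA^+\|$ rather than the coarser product appear. Where you diverge is in how you exploit that factorization. The paper first notes that $\|\EE\AAA^+\|<1$ makes $\GG=\II+\EE\AAA^+$ nonsingular, bounds $\|\GG^{-1}\|\leq 1/(1-\|\EE\AAA^+\|)$ by the Neumann-series argument, and then invokes its Theorem \ref{thAA} (the lemma $\|(\GG\AAA)^+\|\leq\|\GG^{-1}\|\,\|\AAA^+\|$, itself proved by a smallest-singular-value comparison). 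You instead skip the inverse entirely: the reverse triangle inequality gives $\|(\AAA+\EE)\xx\|\geq(1-\|\EE\AAA^+\|)\|\AAA\xx\|$ for unit $\xx$, which lower-bounds $\sigma_{\min}(\AAA+\EE)$ directly, establishes full column rank of $\AAA+\EE$ (a point the paper leaves implicit), and yields the bound upon inverting. Your route is slightly more elementary and self-contained --- no nonsingularity argument, no Neumann bound, no auxiliary lemma --- while the paper's modular route buys reusability, since Theorem \ref{thAA} is invoked again in Section 4 to bound $\|\overset{\circ}{\WW}{}^+\|$. The passage to the second estimate via $\|\EE\AAA^+\|\leq\|\EE\|\,\|\AAA^+\|=\Delta<1$ is identical in both.
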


\begin{proof}
First, because  $\AAA$ is of full column rank, we have that $\AAA^+\AAA=\II_{n\times n}$.
Consequently,
$$\AAA+\EE=(\II+\EE\AAA^+)\AAA.$$
Since $\|\EE\AAA^+\|<1$ by assumption, the matrix $\GG=\II+\EE\AAA^+$ is nonsingular.
The first result now follows from Theorem \ref{thAA} and by the fact that $\|\GG^{-1}\|\leq 1/(1-\|\EE\AAA^+\|)$. The second result follows by invoking  $\|\EE\AAA^+\|\leq \|\EE\|\,\|\AAA^+\|=\Delta$ and the additional assumption that $\Delta<1$.
\end{proof}

\begin{theorem} \label{thA2}
Let $\AAA$ and $\EE$ be as in Theorem \ref{thA1}, $\Delta=\|\EE\|\,\|\AAA^+\|<1$, and let
$\HH=(\AAA+\EE)^+-\AAA^+$. Then
$$ \|\HH\|\leq\sqrt{2}\frac{\Delta}{1-\Delta}\|\AAA^+\|.$$
\end{theorem}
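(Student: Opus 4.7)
My plan is to derive the intermediate bound $\|\HH\|\le\sqrt{2}\,\|\AAA^+\|\,\|\BB^+\|\,\|\EE\|$ with $\BB:=\AAA+\EE$, and then absorb $\|\BB^+\|\,\|\EE\|\le\Delta/(1-\Delta)$ using Theorem~\ref{thA1}. The starting point is an algebraic decomposition of $\HH$: since $\BB$ has full column rank (by Theorem~\ref{thA1}), $\BB^+\BB=\II$, and combining this with $\BB=\AAA+\EE$ yields $\BB^+\AAA=\II-\BB^+\EE$. Right-multiplying by $\AAA^+$ and rearranging gives
\[
\HH=\HH_1-\HH_2,\qquad \HH_1:=\BB^+(\II-\AAA\AAA^+),\quad \HH_2:=\BB^+\EE\AAA^+.
\]

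The crucial observation is that $\HH_1\HH_2^*=\00$, since $(\II-\AAA\AAA^+)\AAA^{+*}=\00$ (the range of $\AAA^{+*}$ lies in the range of $\AAA$). Therefore $\HH\HH^*=\HH_1\HH_1^*+\HH_2\HH_2^*$, and the triangle inequality for spectral norms of positive semi-definite matrices gives $\|\HH\|^2\le\|\HH_1\|^2+\|\HH_2\|^2$. The bound $\|\HH_2\|\le\|\BB^+\|\,\|\EE\|\,\|\AAA^+\|$ is immediate. For $\HH_1$, I would use the identity $\BB^+=\BB^+P_\BB$, with $P_\BB:=\BB\BB^+$, to write $\HH_1=\BB^+P_\BB P_{\AAA^\perp}$, so that $\|\HH_1\|\le\|\BB^+\|\,\|P_\BB P_{\AAA^\perp}\|$. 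The task then reduces to proving $\|P_\BB P_{\AAA^\perp}\|\le\|\EE\|\,\|\AAA^+\|$.

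The step I expect to be the main obstacle is this last inequality. A direct calculation---take a unit $\vv\in\mathrm{range}(\BB)$, write $\vv=\BB\zz$ with $\zz=\BB^+\vv$, and use $P_{\AAA^\perp}\vv=P_{\AAA^\perp}\EE\zz$ (since $P_{\AAA^\perp}\AAA=\00$)---delivers only $\|P_\BB P_{\AAA^\perp}\|\le\|\EE\|\,\|\BB^+\|$, which is too weak by a factor of $(1-\Delta)^{-1}$ after applying Theorem~\ref{thA1}. The sharp bound requires the principal-angle symmetry $\|P_\BB P_{\AAA^\perp}\|=\|P_{\BB^\perp}P_\AAA\|$, valid because $\dim\mathrm{range}(\AAA)=\dim\mathrm{range}(\BB)=n$; this follows from the CS decomposition of two subspaces of equal dimension, or algebraically from the fact that $(P_\AAA P_\BB)^*(P_\AAA P_\BB)=P_\BB P_\AAA P_\BB$ and $(P_\AAA P_\BB)(P_\AAA P_\BB)^*=P_\AAA P_\BB P_\AAA$ share all eigenvalues. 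Once the symmetry is in hand, the dual direct computation---for unit $\vv\in\mathrm{range}(\AAA)$ with $\vv=\AAA\zz$ and $\|\zz\|\le\|\AAA^+\|$, writing $P_{\BB^\perp}\vv=P_{\BB^\perp}(\BB-\EE)\zz=-P_{\BB^\perp}\EE\zz$---yields $\|P_{\BB^\perp}P_\AAA\|\le\|\EE\|\,\|\AAA^+\|$. This gives $\|\HH_1\|\le\|\BB^+\|\,\|\EE\|\,\|\AAA^+\|$ and hence $\|\HH\|\le\sqrt{2}\,\|\AAA^+\|\,\|\BB^+\|\,\|\EE\|$; Theorem~\ref{thA1}'s estimate $\|\BB^+\|\,\|\EE\|\le\Delta/(1-\Delta)$ then completes the proof.
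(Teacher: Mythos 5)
Your proof is correct, but it follows a genuinely more self-contained route than the paper. The paper disposes of the theorem in two lines: it quotes Wedin's perturbation bound $\|\HH\|\leq\sqrt{2}\,\|(\AAA+\EE)^+\|\,\|\AAA^+\|\,\|\EE\|$ (Wedin, Theorem 4.1) and then inserts the estimate $\|(\AAA+\EE)^+\|\leq\|\AAA^+\|/(1-\Delta)$ from Theorem \ref{thA1}. You instead reconstruct that Wedin inequality from scratch in the relevant special case (full column rank, $\Delta<1$): your splitting $\HH=\BB^+(\II-\AAA\AAA^+)-\BB^+\EE\AAA^+$ with $\BB=\AAA+\EE$ is exactly the two surviving terms of Wedin's decomposition when $\BB^+\BB=\II$, the orthogonality $\HH_1\HH_2^*=\00$ is what produces the constant $\sqrt{2}$ rather than a larger one, and your use of the equal-dimension symmetry $\|\BB\BB^+(\II-\AAA\AAA^+)\|=\|(\II-\BB\BB^+)\AAA\AAA^+\|$ is precisely the device needed to avoid the weaker $\|\BB^+\|^2\|\EE\|$ bound you correctly flag; the final absorption of $\|\BB^+\|\,\|\EE\|\leq\Delta/(1-\Delta)$ via Theorem \ref{thA1} coincides with the paper's last step. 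What your version buys is independence from the external citation, at the cost of carrying the projector/principal-angle machinery explicitly. One small repair: of your two justifications for the symmetry, the remark that $(\AAA\AAA^+\,\BB\BB^+)^*(\AAA\AAA^+\,\BB\BB^+)$ and $(\AAA\AAA^+\,\BB\BB^+)(\AAA\AAA^+\,\BB\BB^+)^*$ share eigenvalues only yields the trivial identity $\|\AAA\AAA^+\,\BB\BB^+\|=\|\BB\BB^+\,\AAA\AAA^+\|$, not the complementary-projector identity you need; the CS-decomposition argument is the right one, or, concretely, with orthonormal bases $\QQ_{\AAA},\QQ_{\BB}\in\C^{m\times n}$ of the two ranges both norms equal $\bigl(1-\sigma_{\min}^2(\QQ_{\AAA}^*\QQ_{\BB})\bigr)^{1/2}$, where squareness of $\QQ_{\AAA}^*\QQ_{\BB}$ (i.e., equality of dimensions, guaranteed here since $\mathrm{rank}(\AAA+\EE)=n$ when $\Delta<1$) is what makes the two expressions coincide.
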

\begin{proof}
By Wedin \cite[Theorem 4.1]{Wedin:1973:PTP},  there holds

$$\|\HH\|\leq\sqrt{2}\,\|(\AAA+\EE)^+\|\,\|\AAA^+\|\,\|\EE\|.$$
Invoking now Theorem \ref{thA1}, the result follows.
\end{proof}

The following theorem is due to Stewart \cite{Stewart:1969:CGI}.
\begin{theorem} \label{thA4}
Let $\AAA_1,\AAA_2,\ldots,$ and $\AAA$ be such that $\lim_{n\to\infty}\AAA_n=\AAA$. Then
$\lim_{n\to\infty}\AAA_n^+=\AAA^+$ if and only if $\text{\em rank}(\AAA_n)=\text{\em rank}(\AAA)$, $n\geq n_0$,  for some integer $n_0$.
\end{theorem}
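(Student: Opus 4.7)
The plan is to prove each direction separately, using two tools: continuity of singular values under perturbation, and the identity $\|\KK^+\|=1/\sigma_{\min}^{>0}(\KK)$ valid whenever $\KK$ has at least one nonzero singular value $\sigma_{\min}^{>0}(\KK)$. Set $r=\text{rank}(\AAA)$, so the singular values of $\AAA$ satisfy $\sigma_1(\AAA)\geq\cdots\geq\sigma_r(\AAA)>0=\sigma_{r+1}(\AAA)=\cdots$. Since $\AAA_n\to\AAA$, Weyl's inequality for singular values (1-Lipschitz in operator norm) gives $\sigma_i(\AAA_n)\to\sigma_i(\AAA)$ for every $i$.

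For the forward implication, I would assume $\AAA_n^+\to\AAA^+$, so that $\|\AAA_n^+\|$ is bounded. On the one hand, $\sigma_i(\AAA_n)\to\sigma_i(\AAA)>0$ for $i\leq r$ implies $\text{rank}(\AAA_n)\geq r$ for $n$ large (lower semi-continuity of rank). On the other hand, if $\text{rank}(\AAA_n)>r$ held along some subsequence, then $\sigma_{r+1}(\AAA_n)>0$ along that subsequence with $\sigma_{r+1}(\AAA_n)\to 0$, forcing
$$\|\AAA_n^+\|\;=\;\frac{1}{\sigma_{\text{rank}(\AAA_n)}(\AAA_n)}\;\geq\;\frac{1}{\sigma_{r+1}(\AAA_n)}\;\longrightarrow\;\infty,$$
contradicting boundedness. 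Hence $\text{rank}(\AAA_n)=r$ for all $n$ sufficiently large.

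For the reverse implication, assume $\text{rank}(\AAA_n)=r$ for $n\geq n_0$. Then $\|\AAA_n^+\|=1/\sigma_r(\AAA_n)\to 1/\sigma_r(\AAA)=\|\AAA^+\|$, so $\|\AAA_n^+\|$ is uniformly bounded. I would then invoke the rank-preserving Wedin perturbation bound,
$$\|\AAA_n^+-\AAA^+\|\;\leq\;\sqrt{2}\,\|\AAA_n^+\|\,\|\AAA^+\|\,\|\AAA_n-\AAA\|,$$
whose right-hand side tends to zero, to conclude $\AAA_n^+\to\AAA^+$.

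The main obstacle is that Theorem \ref{thA2} from the appendix establishes the Wedin bound only when $\AAA$ has full column rank, whereas here $\AAA$ may be rank-deficient; the bound must be extended to arbitrary matrices of equal rank. One option is to cite Wedin \cite{Wedin:1973:PTP} directly. A more self-contained route is to take a full-rank factorization $\AAA=\BB\CC$ with $\BB$ of full column rank and $\CC$ of full row rank, and to build analogous factorizations $\AAA_n=\BB_n\CC_n$ for $n\geq n_0$; by \eqref{eqA3} we then have $\AAA_n^+=\CC_n^+\BB_n^+$ and $\AAA^+=\CC^+\BB^+$, so it would suffice to arrange $\BB_n\to\BB$ and $\CC_n\to\CC$ and then apply Theorem \ref{thA1} to each factor. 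The delicate point is this choice of factors: individual singular vectors need not depend continuously on the matrix when there are repeated singular values, so the factorizations must be built from objects that do vary continuously once the rank stabilizes, for instance the orthogonal projections onto the $r$-dimensional range and row space of $\AAA_n$, which converge to their counterparts for $\AAA$ precisely under the rank-constancy hypothesis.
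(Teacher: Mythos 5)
The paper does not actually prove Theorem \ref{thA4}; it states it and defers to Stewart \cite{Stewart:1969:CGI}, so there is no in-paper argument to compare against. Your proposal is, in substance, a correct proof. The forward direction is complete and elementary: continuity of singular values gives $\sigma_r(\AAA_n)\to\sigma_r(\AAA)>0$, hence $\text{rank}(\AAA_n)\geq r$ eventually with no hypothesis at all, and the only possible failure of rank equality is $\text{rank}(\AAA_n)>r$ along a subsequence, which forces $\|\AAA_n^+\|\geq 1/\sigma_{r+1}(\AAA_n)\to\infty$ and contradicts convergence of $\AAA_n^+$. The reverse direction is also sound, and you are right to flag that Theorem \ref{thA2} as stated in the appendix does not cover it, since $\AAA$ need not have full column rank; citing Wedin's equal-rank bound directly is legitimate (for $n$ large, $\|\AAA_n-\AAA\|<\sigma_r(\AAA)$ together with $\text{rank}(\AAA_n)=r$ makes the perturbation acute in Wedin's sense, so his Theorem 4.1 applies, possibly with a constant other than $\sqrt{2}$ --- immaterial for convergence).

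The one soft spot is your self-contained alternative: the construction of factorizations $\AAA_n=\BB_n\CC_n$ with $\BB_n\to\BB$, $\CC_n\to\CC$ is only sketched, and the continuity of the range and row-space projectors under rank constancy is asserted rather than proved (it follows, e.g., by writing $\AAA_n\AAA_n^+$ as the Riesz spectral projector of $\AAA_n\AAA_n^*$ for the eigenvalues exceeding $\sigma_r(\AAA)^2/2$, which depends continuously on $\AAA_n$ once the rank stabilizes). A shorter way to close the reverse direction without any factorization is Wedin's three-term identity
\begin{equation*}
\AAA_n^+-\AAA^+=-\AAA_n^+(\AAA_n-\AAA)\AAA^++\AAA_n^+(\AAA_n^+)^*(\AAA_n-\AAA)^*(\II-\AAA\AAA^+)+(\II-\AAA_n^+\AAA_n)(\AAA_n-\AAA)^*(\AAA^+)^*\AAA^+,
\end{equation*}
which holds for arbitrary matrices with no rank hypothesis; each term is bounded by $\max\{\|\AAA_n^+\|,\|\AAA^+\|\}^2\,\|\AAA_n-\AAA\|$, and the uniform bound on $\|\AAA_n^+\|$ is exactly what your singular-value argument already supplies. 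With either repair the proof is complete.
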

 \section*{Acknowledgement} The author would like to thank one of the anonymous referees for his/her remarks that helped to improve the presentation and results of this work substantially.

\begin{thebibliography}{10}

\bibitem{Anderson:1965:IPN}
D.G. Anderson.
\newblock Iterative procedures for nonlinear integral equations.
\newblock {\em Journal of the ACM}, 12:547--560, 1965.

\bibitem{Ben-Israel:1966:EBG}
A.~Ben-Israel.
\newblock On error bounds for generalized inverses.
\newblock {\em SIAM J. Numer. Anal.}, 3:585--592, 1966.

\bibitem{Ben-Israel:2003:GIT}
A.~Ben-Israel and T.N.E. Greville.
\newblock {\em {Generalized Inverses: Theory and Applications}}.
\newblock CMS Books in Mathematics. Springer-Verlag, New York, second edition,
  2003.

\bibitem{Brezinski:1970:AEA}
C.~Brezinski.
\newblock Application de l'$\epsilon$-algorithme {\`{a}} la r\'{e}solution des
  syst\`{e}mes non lin\'{e}aires.
\newblock {\em C. R. Acad. Sci. Paris}, 271 A:1174--1177, 1970.

\bibitem{Brezinski:1971:ARS}
C.~Brezinski.
\newblock Sur un algorithme de r\'{e}solution des syst\`{e}mes non
  lin\'{e}aires.
\newblock {\em C. R. Acad. Sci. Paris}, 272 A:145--148, 1971.

\bibitem{Brezinski:1975:GTS}
C.~Brezinski.
\newblock G\'{e}n\'{e}ralisations de la transformation de {Shanks}, de la table
  de {Pad\'{e}}, et de l'$\epsilon$-algorithme.
\newblock {\em Calcolo}, 12:317--360, 1975.

\bibitem{Brezinski:1977:ACA}
C.~Brezinski.
\newblock {\em {Acc\'{e}l\'{e}ration de la Convergence en Analyse
  Num\'{e}rique}}.
\newblock Number 584 in Lecture Notes in Mathematics. Springer-Verlag, Berlin,
  1977.

\bibitem{Brezinski:1991:EMT}
C.~Brezinski and M.~{Redivo Zaglia}.
\newblock {\em {Extrapolation Methods: Theory and Practice}}.
\newblock North-Holland, Amsterdam, 1991.

\bibitem{Cabay:1976:PEM}
S.~Cabay and L.W. Jackson.
\newblock A polynomial extrapolation method for finding limits and antilimits
  of vector sequences.
\newblock {\em SIAM J. Numer. Anal.}, 13:734--752, 1976.

\bibitem{Campbell:1991:GIL}
S.L. Campbell and C.D. {Meyer, Jr.}
\newblock {\em {Generalized Inverses of Linear Transformations}}.
\newblock Dover, New York, 1991.

\bibitem{Eddy:1979:ELV}
R.P. Eddy.
\newblock Extrapolating to the limit of a vector sequence.
\newblock In P.C.C. Wang, editor, {\em Information {Linkage} {Between}
  {Applied} {Mathematics} and {Industry}}, pages 387--396, New York, 1979.
  Academic Press.

\bibitem{Gekeler:1972:SSE}
E.~Gekeler.
\newblock On the solution of systems of equations by the epsilon algorithm of
  {Wynn}.
\newblock {\em Math. Comp.}, 26:427--436, 1972.

\bibitem{Golub:2013:MC}
G.H. Golub and C.F.~Van Loan.
\newblock {\em {Matrix Computations}}.
\newblock The Johns Hopkins University Press, Baltimore, fourth edition, 2013.

\bibitem{Graves:1988:RCT}
P.R. Graves-Morris and E.B. Saff.
\newblock Row convergence theorems for generalised inverse vector-valued
  {Pad\'{e}} approximants.
\newblock {\em J. Comp. Appl. Math.}, 23:63--85, 1988.

\bibitem{Jbilou:1991:SRA}
K.~Jbilou and H.~Sadok.
\newblock Some results about vector extrapolation methods and related
  fixed-point iterations.
\newblock {\em J. Comp. Appl. Math.}, 36:385--398, 1991.

\bibitem{Jbilou:1995:ASV}
K.~Jbilou and H.~Sadok.
\newblock Analysis of some vector extrapolation methods for linear systems.
\newblock {\em Numer. Math.}, 70:73--89, 1995.

\bibitem{Jbilou:1999:LUI}
K.~Jbilou and H.~Sadok.
\newblock {LU}-implementation of the modified minimal polynomial extrapolation
  method.
\newblock {\em IMA J. Numer. Anal.}, 19:549--561, 1999.

\bibitem{Kaniel:1974:LSA}
S.~Kaniel and J.~Stein.
\newblock Least-square acceleration of iterative methods for linear equations.
\newblock {\em J. Optimization Theory Appl.}, 14:431--437, 1974.

\bibitem{Laurens:1995:FIV}
J.~Laurens and H.~Le Ferrand.
\newblock Fonctions d'it\'{e}rations vectorielles, it\'{e}rations rationelles.
\newblock {\em C. R. Acad. Sci. Paris}, 321 I:631--636, 1995.

\bibitem{LeFerrand:1992:CTE}
H.~{Le Ferrand}.
\newblock Convergence of the topological $\epsilon$-algorithm for solving
  systems of nonlinear equations.
\newblock {\em Numer. Algorithms}, 3:273--283, 1992.

\bibitem{Mesina:1977:CAI}
M.~Me{\u{s}}ina.
\newblock Convergence acceleration for the iterative solution of the equations
  {$X=AX+f$}.
\newblock {\em Comput. Methods Appl. Mech. Engrg.}, 10:165--173, 1977.

\bibitem{Ortega:1970:ISN}
J.~Ortega and W.~Rheinboldt.
\newblock {\em {Iterative Solution of Nonlinear Equations in Several
  Variables}}.
\newblock Academic Press, New York, 1970.

\bibitem{Pugachev:1978:ACI}
B.P. Pugachev.
\newblock Acceleration of the convergence of iterative processes and a method
  of solving systems of nonlinear equations.
\newblock {\em U.S.S.R. Comput. Math. Math. Phys.}, 17:199--207, 1978.

\bibitem{Shanks:1955:NTD}
D.~Shanks.
\newblock Nonlinear transformations of divergent and slowly convergent
  sequences.
\newblock {\em J. Math. and Phys.}, 34:1--42, 1955.

\bibitem{Sidi:1986:CSP}
A.~Sidi.
\newblock Convergence and stability properties of minimal polynomial and
  reduced rank extrapolation algorithms.
\newblock {\em SIAM J. Numer. Anal.}, 23:197--209, 1986.
\newblock Originally appeared as NASA TM-83443 (1983).

\bibitem{Sidi:1988:EVP}
A.~Sidi.
\newblock Extrapolation vs. projection methods for linear systems of equations.
\newblock {\em J. Comp. Appl. Math.}, 22:71--88, 1988.

\bibitem{Sidi:1991:EIM}
A.~Sidi.
\newblock Efficient implementation of minimal polynomial and reduced rank
  extrapolation methods.
\newblock {\em J. Comp. Appl. Math.}, 36:305--337, 1991.
\newblock Originally appeared as NASA TM-103240 ICOMP-90-20 (1990).

\bibitem{Sidi:1994:CIR}
A.~Sidi.
\newblock Convergence of intermediate rows of minimal polynomial and reduced
  rank extrapolation tables.
\newblock {\em Numer. Algorithms}, 6:229--244, 1994.

\bibitem{Sidi:1996:ECW}
A.~Sidi.
\newblock Extension and completion of {Wynn's} theory on convergence of columns
  of the epsilon table.
\newblock {\em J. Approx. Theory}, 86:21--40, 1996.

\bibitem{Sidi:2012:RTV}
A.~Sidi.
\newblock Review of two vector extrapolation methods of polynomial type with
  applications to large-scale problems.
\newblock {\em J. Comput. Sci.}, 3:92--101, 2012.

\bibitem{Sidi:2016:SVD-MPE}
A.~Sidi.
\newblock {SVD-MPE: An SVD-based} vector extrapolation method of polynomial
  type.
\newblock {\em Appl. Math.}, 7:1260--1278, 2016.
\newblock Special issue on Applied Iterative Methods.

\bibitem{Sidi:2017:MPERRE}
A.~Sidi.
\newblock Minimal polynomial and reduced rank extrapolation methods are
  related.
\newblock {\em Adv. Comput. Math.}, 43:151--170, 2017.

\bibitem{Sidi:2017:VEM}
A.~Sidi.
\newblock {\em Vector Extrapolation Methods with Applications}.
\newblock Number~17 in SIAM Series on Computational Science and Engineering.
  SIAM, Philadelphia, 2017.

\bibitem{Sidi:1988:CSA}
A.~Sidi and J.~Bridger.
\newblock Convergence and stability analyses for some vector extrapolation
  methods in the presence of defective iteration matrices.
\newblock {\em J. Comp. Appl. Math.}, 22:35--61, 1988.

\bibitem{Sidi:1986:ACV}
A.~Sidi, W.F. Ford, and D.A. Smith.
\newblock Acceleration of convergence of vector sequences.
\newblock {\em SIAM J. Numer. Anal.}, 23:178--196, 1986.
\newblock Originally appeared as NASA TP-2193 (1983).

\bibitem{Sidi:1991:UBC}
A.~Sidi and Y.~Shapira.
\newblock Upper bounds for convergence rates of vector extrapolation methods on
  linear systems with initial iterations.
\newblock Technical Report 701, Computer Science Dept., Technion--Israel
  Institute of Technology, 1991.
\newblock Appeared also as NASA TM-105608 ICOMP-92-09
  (1992).

\bibitem{Sidi:1998:UBC}
A.~Sidi and Y.~Shapira.
\newblock Upper bounds for convergence rates of acceleration methods with
  initial iterations.
\newblock {\em Numer. Algorithms}, 18:113--132, 1998.

\bibitem{Skelboe:1980:CPS}
S.~Skelboe.
\newblock Computation of the periodic steady-state response of nonlinear
  networks by extrapolation methods.
\newblock {\em IEEE Trans. Circuits and Systems}, 27:161--175, 1980.

\bibitem{Smith:1987:EMV}
D.A. Smith, W.F. Ford, and A.~Sidi.
\newblock Extrapolation methods for vector sequences.
\newblock {\em SIAM Rev.}, 29:199--233, 1987.
\newblock Erratum: {\em SIAM Rev.,} 30:623--624, 1988.

\bibitem{Stewart:1969:CGI}
G.W. Stewart.
\newblock On the continuity of the generalized inverse.
\newblock 17:33--45, 1969.

\bibitem{Toth:2015:CAA}
A.~Toth and C.T. Kelly.
\newblock Convergence analysis for {Anderson} acceleration.
\newblock {\em SIAM J. Numer. Anal.}, 53:805--819, 2015.

\bibitem{Varga:2000:MIA}
R.S. Varga.
\newblock {\em Matrix {Iterative} {Analysis}}.
\newblock Number~27 in Springer Series in Computational Mathematics.
  Springer--Verlag, New York, second edition, 2000.

\bibitem{Walker:2011:AAF}
H.F. Walker and Peng Ni.
\newblock Anderson acceleration for fixed-point iterations.
\newblock {\em SIAM J. Numer. Anal.}, 49:1715--1735, 2011.

\bibitem{Wedin:1973:PTP}
P.\AA. Wedin.
\newblock Perturbation theory for pseudo-inverses.
\newblock {\em BIT}, 13:217--232, 1973.

\bibitem{Wynn:1956:DCT}
P.~Wynn.
\newblock On a device for computing the $e_m({S}_n)$ transformation.
\newblock {\em Mathematical Tables and Other Aids to Computation}, 10:91--96,
  1956.

\bibitem{Wynn:1962:ATI}
P.~Wynn.
\newblock Acceleration techniques for iterated vector and matrix problems.
\newblock {\em Math. Comp.}, 16:301--322, 1962.

\bibitem{Wynn:1966:CSE}
P.~Wynn.
\newblock On the convergence and stability of the epsilon algorithm.
\newblock {\em SIAM J. Numer. Anal.}, 3:91--122, 1966.

\end{thebibliography}

\end{document}